\documentclass[11pt,reqno]{amsart}
\usepackage {amssymb}
\usepackage {amsmath}
\usepackage {bbm}
\usepackage{amsthm}
\usepackage{mathtools}
\usepackage{graphicx}
\usepackage {amscd}
\usepackage {epic}
\usepackage{array}
\usepackage{booktabs}
\usepackage{tabularx}
\usepackage{mathrsfs}

\usepackage{etaremune}

\DeclareFontFamily{U}{dutchcal}{\hyphenchar\font=-1}
\DeclareFontShape{U}{dutchcal}{m}{n}{ <-> dutchcal-r}{}
\DeclareSymbolFont{dutchletters}{U}{dutchcal}{m}{n}

\DeclareMathSymbol{\mathdutchcal}{0}{dutchletters}{"41} 
\newcommand{\dcal}[1]{\text{\usefont{U}{dutchcal}{m}{n}#1}}

\usepackage[dvipsnames]{xcolor}
\usepackage[colorlinks,citecolor=OliveGreen,linkcolor=Mahogany,urlcolor=Plum,pagebackref]{hyperref}
\usepackage[alphabetic]{amsrefs}
\usepackage{cleveref}

\usepackage{enumerate}
\usepackage{tikz}
\usepackage{tikz-cd}
\usetikzlibrary{arrows.meta}
\usepackage{verbatim,color,geometry}
\usepackage[all]{xy}
\usepackage{enumitem}
\usepackage{bm}
\usepackage{longtable}
\usepackage{aliascnt}
\usetikzlibrary{calc}
\usepackage{textcomp}
\usepackage{tablefootnote}
\usepackage{float}

\newcommand{\ove}[1]{\overline{#1}}
\newcommand{\wt}[1]{\widetilde{#1}}

\newcommand{\cC}{\dcal{C}}

\newcommand{\cE}{\dcal{E}}

\newcommand{\cG}{\dcal{G}}
\newcommand{\cH}{\dcal{H}}
\newcommand{\cI}{\dcal{I}}

\newcommand{\cK}{\dcal{K}}
\newcommand{\cL}{\dcal{L}}
\newcommand{\cM}{\dcal{M}}
\newcommand{\cN}{\dcal{N}}
\newcommand{\cO}{\dcal{O}}

\newcommand{\cQ}{\dcal{Q}}

\newcommand{\cS}{\dcal{S}}

\newcommand\MM{{\mathcal{M}}}

\newcommand{\bP}{\mathbb{P}}
\newcommand{\bC}{\mathbb{C}}

\newcommand{\bZ}{\mathbb{Z}}

\newcommand{\bG}{\mathbb{G}}

\newcommand{\bH}{\mathbb{H}}

\newcommand{\Lie}{\mathrm{Lie}}

\newcommand{\coker}{\mathrm{Coker}}

\newcommand{\Gr}{\mathrm{Gr}}

\newcommand{\sX}{\mathscr{X}}

\DeclareMathOperator{\Aut}{Aut}
\DeclareMathOperator{\Stab}{Stab}

\DeclareMathOperator{\divi}{div}

\DeclareMathOperator{\Exc}{Exc}

\DeclareMathOperator{\Hilb}{Hilb}

\DeclareMathOperator{\GL}{GL}

\DeclareMathOperator{\im}{Im}

\DeclareMathOperator{\sing}{sing}
\DeclareMathOperator{\univ}{univ}
\DeclareMathOperator{\Supp}{Supp}

\DeclareMathAlphabet{\mathbbb}{U}{bbold}{m}{n}

\newcommand{\tC}{\widetilde{C}}

\newcommand{\sC}{\mathscr{C}}

\newcommand{\rk}{\mathrm{rk}}

\newcommand{\tensor}{\otimes}

\numberwithin{equation}{section}

\newtheorem{prop}{Proposition}[section]

\newcommand{\newaliastheorem}[3]{%
  \newaliascnt{#1}{prop}%
  \newtheorem{#1}[#1]{#2}%
  \aliascntresetthe{#1}%
  \crefname{#1}{#2}{#3}%
  \Crefname{#1}{#2}{#3}%
}

\newaliastheorem{thm}{Theorem}{Theorems}
\newaliastheorem{theorem}{Theorem}{Theorems}
\newaliastheorem{lem}{Lemma}{Lemmas}
\newaliastheorem{lemma}{Lemma}{Lemmas}
\newaliastheorem{cor}{Corollary}{Corollaries}
\newaliastheorem{corollary}{Corollary}{Corollaries}
\newaliastheorem{theodef}{Theorem-Definition}{Theorem-Definitions}
\newaliastheorem{prop-def}{Proposition-Definition}{Proposition-Definitions}
\newaliastheorem{convention}{Convention}{Conventions}
\newaliastheorem{conj}{Conjecture}{Conjectures}
\newaliastheorem{conjecture}{Conjecture}{Conjectures}

\theoremstyle{definition}
\newaliastheorem{defi}{Definition}{Definitions}
\newaliastheorem{defn}{Definition}{Definitions}
\newaliastheorem{exa}{Example}{Examples}
\newaliastheorem{exam}{Example}{Examples}
\newaliastheorem{expl}{Example}{Examples}
\newaliastheorem{rem}{Remark}{Remarks}
\newaliastheorem{rmk}{Remark}{Remarks}
\newaliastheorem{remark}{Remark}{Remarks}
\newaliastheorem{que}{Question}{Questions}
\newaliastheorem{pro}{Problem}{Problem}

\crefname{prop}{Proposition}{Propositions}
\Crefname{prop}{Proposition}{Propositions}

\title{Algebraic hyperbolicity of very general hypersurfaces in projective spaces}

\author{Sixuan Lou}
\address{Department of Mathematics Statistics and Computer Science, University of Illinois Chicago, 851 S Morgan St, Chicago, IL 60607, USA}
\email{tmzl.sx@gmail.com}

\author{Junyan Zhao}
\address{Department of Mathematics, University of Maryland, 4176 Campus Dr, College Park, MD 20742, USA}
\email{jzhao81@umd.edu}

\date{}

\begin{document}

\begin{abstract}
We prove that a very general sextic threefold in $\bP^4$ is algebraically hyperbolic, settling the last open case and completing the classification of algebraic hyperbolicity for very general hypersurfaces in projective space. The proof follows the Coskun--Riedl scroll construction, with the moduli of stable maps and the geometric canonical height as new ingredients.
\end{abstract}
\maketitle

\tableofcontents

\section{Introduction}

A complex projective variety $X\subseteq\bP^n$ is \emph{algebraically hyperbolic} if there exists a constant $\epsilon>0$ such that
\[
2p_g(C)-2\ \geq\  \epsilon\deg C
\]
for every integral curve $C\subseteq X$. In particular, algebraically hyperbolic varieties contain no rational or elliptic curves.

Algebraic hyperbolicity is closely related to the complex analytic notion of Brody hyperbolicity introduced in \cite{Br78}, which characterizes the nondegeneracy of the Kobayashi metric. A complex variety $X$ is \emph{Brody hyperbolic} if every holomorphic map
$\bC\to X$ is constant. It is proved in \cite{Dem97} that Brody hyperbolicity implies algebraic hyperbolicity, and conjecturally the two notions are equivalent for smooth projective varieties.

\subsection{Algebraic hyperbolicity for hypersurfaces}
Very general hypersurfaces form a natural testing ground for deep conjectures on hyperbolicity. This leads to the following fundamental problem.

\begin{pro}
Determine the algebraic hyperbolicity of a very general hypersurface $X_d\subseteq\bP^n$ of degree $d$.
\end{pro}

This problem has been studied extensively; see \Cref{sec:History} for a
detailed account. The only case left unresolved by the previous results is that of a very general sextic threefold in $\bP^4$, which is settled
by the main theorem of this paper.

\begin{theorem}\label{thm:main-intro}
A very general sextic threefold $X_6\subseteq\bP^4$ is algebraically
hyperbolic.
\end{theorem}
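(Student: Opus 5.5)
We follow the Clemens--Ein--Voisin--Pacienza--Coskun--Riedl strategy: produce positivity of the canonical bundle on the normalization of any curve from the tangent directions of the universal family. Let $\cH \subseteq \bP(H^0(\bP^4,\cO(6)))$ parametrize sextic threefolds and let $\mathscr{X}\to \cH$ be the universal hypersurface. Arguing by contradiction, suppose very general $X$ is not algebraically hyperbolic. Then for every $\epsilon>0$ there are integral curves $C\subseteq X$ with $2g(\tC)-2<\epsilon\deg C$, where $\tC$ is the normalization. By a standard Hilbert-scheme/countability argument we pass to an étale base change $U\to\cH$ over which there is a family of curves $\mathscr{C}\to U$, $\mathscr{C}\subseteq \mathscr{X}_U$, dominating $\cH$. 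We choose the family equivariant for $\GL_5$, and we record the maps $f:\tC\to X$ in the moduli of stable maps rather than as embedded curves, so that normalizations vary in a flat family and the deformation theory can be computed on $\tC$. The standard deformation argument then yields generically globally generated sections of $N_{f/X}$ (the normal sheaf of the map, modulo torsion), coming from $T_{\bP^4}|_{\tC}$ twisted via the Lazarsfeld-type bundle $M_{\cO(6)}$. Concretely, $f^*M_{\cO_{\bP^4}(1)}(1)$ maps onto $N_{f/X}$ away from finitely many points, up to the torsion coming from singular points.

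The next step is the Coskun--Riedl scroll construction. For a general point and a general direction we use lines, or more generally the scroll swept by lines through points of $C$, to cut down the normal sheaf. This shows that a quotient of $N_{f/X}\otimes\cO(-1)$ (or of a twist $\cO(-k)$) is generically globally generated, so its determinant has nonnegative degree. Since $\det N_{f/X} = K_{\tC}\otimes f^*\cO(5-6)^{-1}$ by adjunction with $K_X=\cO_X(1)$, we obtain
\[
2g(\tC)-2 \;\ge\; (d-n-1+\text{gain})\deg C - (\text{error}).
\]
Here $d-n-1=1$. The contribution from the scroll is expected to land exactly at the boundary: it gives $2g-2\ge 0\cdot\deg C$, not a positive slope. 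This borderline is precisely why $(d,n)=(6,4)$ was left open.

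The main obstacle is to gain a small linear positivity $\epsilon\deg C$. It should come from the defect of the scroll, which I plan to measure with the geometric canonical height. The idea is to compare $\deg f^*\cO(1)$ with the degree of $K_{\tC}$ restricted to a subfamily, using a height inequality of Vojta/Kim type for the induced curve in a relative setting. The height of $\tC$ relative to the family of lines (a curve in the Grassmannian or scroll) should be bounded by $(2+\delta)(2g-2)+O(\text{ramification})$. Combined with the nonnegativity from the scroll, this should force $2g-2\ge\epsilon\deg C$. I would first check the case where the scroll degenerates, meaning $C$ lies in a cone or the tangent lines are special. That case is handled separately by a dimension count on the incidence variety of sextics containing such curves, which shows it does not occur for very general $X$.

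Finally, elliptic and rational curves (where $2g-2\le 0$) are already excluded by Clemens/Voisin/Pacienza-type results for very general sextic threefolds. So it suffices to bound the slope uniformly, and the countable union of bad loci then gives the statement for very general $X$.
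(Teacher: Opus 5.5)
There is a genuine gap. The step that has to produce positivity beyond $2g-2\ge 0$ is only announced (``should come from'', ``I plan to'', ``should be bounded by''), and as formulated it cannot work. A canonical-height inequality needs a fibration whose fibers have genus at least $2$, together with a section. The family of lines through points of $C$ has rational fibers, so a height ``relative to the family of lines'' carries no information. Moreover, a Vojta-type bound $(2+\delta)(2g-2)+O(1)$ has an error term that depends on the fibration, and here the fibration moves with $C$; no uniform $\epsilon$ results unless that term is controlled.

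The setup is also off. In fact $\det N_{f/X}=K_{\tC}\otimes f^*\cO(-1)$, so your twist has the wrong sign. Also, if $N_{f/X}(-1)$ (or a full-rank quotient) were generically globally generated, you would already get $2g-2\ge 3\deg C$, not a borderline. The borderline really comes from the scroll. Suppose the image $\cN$ of $\cM=f^*\Omega_{\bP^4}(1)$ in $N_{f/X}$, under multiplication by a general quintic, has rank $2$. Then global generation of the quotient $\cE$ of $V\otimes\cO_{\tC}$, together with $\cN\subseteq N_{f/X}$, gives $0\le\deg\phi^*\cO_{\bG}(1)=\deg C+\deg\cN\le 2g-2$. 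Here $\phi\colon\tC\to\bG(2,\bP^4)$ records planes $\Lambda_p\ni f(p)$.

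The missing ideas are the following.

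\emph{Rank one.} If $\cN$ has rank $1$, the scroll is by lines $\ell_p$. One shows, using $\GL_5$-equivariance and the stabilizer of $(\ell_p,f(p))$, that $\ell_p\cap X=6f(p)$. Then $C$ lies in the osculating locus, and Pacienza's formula for the canonical class of its desingularization gives $2g-2\ge 4\deg C$.

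\emph{Rank two.} If $\cN$ has rank $2$, the gain comes from the fibration over $\tC$ by the plane sections $X\cap\Lambda_p$, with section $p\mapsto f(p)$. For very general $X$ these sections are integral, with normalizations of genus between $4$ and $10$. To make the estimate uniform, the paper does the following.
\begin{itemize}
\item It stratifies $\bG$ so that plane sections normalize simultaneously on each stratum $\bG_i$.
\item It fixes a finite cover $\bH_i$ of the graph closure of $\bG_i\dashrightarrow\ove{\MM}_{g_i}(X,\beta)$.
\item It base changes along $\tau\colon C'\to\tC$ of degree $r$, obtaining a semistable surface $S\to C'$ with a section $\Sigma$.
\end{itemize}
Three estimates then close the argument.
\begin{itemize}
\item The logarithmic Bogomolov--Miyaoka--Yau inequality gives $K_{S/C'}\cdot\Sigma\le(2g_i-1)(2g(C')-2+s)$, with no fibration-dependent constant.
\item The number $s$ of singular fibers and the ramification of $\tau$ are bounded by a fixed multiple of $r\deg\phi^*\cO_{\bG}(1)\le r(2g-2)$, by pulling back a fixed divisor from $\bH_i$.
\item Relative global generation of $\omega^{\otimes 3}\otimes\nu^*\cO_X(-1)$ on the Deligne--Mumford stable fibers, with $\nu$ the map to $X$, gives $r\deg C\le 3K_{S/C'}\cdot\Sigma+O(r(2g-2))$.
\end{itemize}

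\emph{Degenerate cases.} Two further cases need separate arguments: $\phi$ constant, and $f(p)$ singular on $X\cap\Lambda_p$ for all $p$, where no section of the normalized family exists.

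None of this is in your proposal, so the decisive inequality remains unproved.
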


Together with the previous results, \Cref{thm:main-intro} completes the
classification of algebraic hyperbolicity for very general hypersurfaces
in projective space.

\smallskip

It would be interesting to investigate whether the moving linear subspaces technique can be adapted to some of the remaining low dimensional cases for very general complete intersections; see
\cite[Corollary~B]{BP26}.

\medskip

\subsection{Sketch of the proof}
Let $X\subseteq\bP^4$ be a very general sextic threefold, and let
$f\colon\tC\to X$ be the normalization map of an integral curve. A standard equivariant deformation framework for curves on very general hypersurfaces produces a family of linear subspaces of $\bP^4$ parametrized by $\tC$. The resulting scroll is swept out either by lines or by planes. The line scroll case can be treated using techniques already available in the literature; see e.g. \cite[Remark~3.5]{Yeong25}. For the reader's convenience, we provide a
self-contained proof in \Cref{sec:line-scroll}, presenting the argument from a more geometric perspective.

\smallskip
The main case is that of a scroll of planes. The construction associates with $f$ a morphism
\[
\phi\colon\tC\longrightarrow\bG(2,\bP^4),
\qquad
p\longmapsto\Lambda_p,
\]
where $\Lambda_p\subseteq\bP^4$ is a plane containing $f(p)$. The cases in which $\phi$ is constant or $f(p)$ is a singular point of
$X\cap\Lambda_p$ for every $p\in\tC$ are treated separately. We may
therefore restrict our attention to the generic case.

\smallskip
After a finite base change $C'\to\tC$, the normalization of the
pullback of $\cC_{\bG}$ extends, by the properness of the moduli stack
of Kontsevich stable maps, to a family of prestable curves equipped with a morphism to $X$. Its fibers are in fact DM stable. The
points $f(p)\in X\cap\Lambda_p$ define a section $\Sigma$, yielding a surface $S$ of general type fibered in stable curves over $C'$. Using the section $\Sigma\simeq C'$, one can bound
$\deg f^*\cO_{\bP^4}(1)$ in terms of numerical invariants of the
fibration, including the geometric canonical height $(K_{S/C'}\cdot\Sigma)$. The problem is thereby reduced to the geometry of a fibered surface of general type. The logarithmic Bogomolov--Miyaoka--Yau inequality bounds $(K_{S/C'}\cdot\Sigma)$ in terms of the genera of the fibers and of $C'$, together with the number of singular fibers.

\smallskip

To control the number of singular fibers and the ramification of
$C'\to\tC$, we consider the universal family of plane sections
\[
\cC_{\bG}\longrightarrow\bG(2,\bP^4),
\qquad
\cC_{[\Lambda]}=X\cap\Lambda.
\]
After stratifying $\bG$, this family admits a simultaneous
normalization over each stratum. For every stratum $\bG_i$, we carry
out the preceding constructions universally, obtaining bounds for both the ramification of $C'\to\tC$ and the number of singular fibers in terms of $2g(\tC)-2$, whose coefficients depend only on the fixed data associated with $\bG_i$,
whenever $\phi(\eta_{\tC})\in\bG_i$. Since there are only finitely many
strata, these bounds yield the desired uniform genus--degree inequality and hence the algebraic hyperbolicity of $X$.

\smallskip

\subsection{History and prior work}\label{sec:History}

The study of curves on very general hypersurfaces has a long history, and the principal results are summarized in the following table.{
\small
\setlength{\tabcolsep}{6pt}
\renewcommand{\arraystretch}{1.25}

\begin{longtable}{
  @{}
  p{0.24\textwidth}
  p{0.72\textwidth}
  @{}
}
\toprule
Reference & Result \\
\midrule
\endfirsthead

\toprule
Reference & Result \\
\midrule
\endhead

\endfoot

\bottomrule

\caption{History of Algebraic hyperbolicity of hypersurfaces in $\bP^n$.}
\label{tab:hyperbolicity-history}\\
\endlastfoot

\cite[Theorem~8(i)]{BVdV78}
&
every $X_d\subseteq \bP^n$ with $d\leq 2n-3$ contains lines.
\\
\midrule

Bogomolov--Mumford \cite[Appendix]{MM83}
&
every quartic K3 surface $S\subseteq \bP^3$ contains a rational curve.
\\
\midrule

\cite{Xu94}
&
very general surfaces
$X_d\subseteq \bP^3$ with $d\geq 6$ are hyperbolic.
\\
\midrule

\cite{Voi96}
&
very general hypersurfaces
$X_d\subseteq \bP^n$ with $n\geq 4$ and $d\geq 2n-1$ are hyperbolic.
\\
\midrule

\cite{CR04,Pac04}
&
very general hypersurfaces
$X_d\subseteq \bP^n$ with $n\geq 6$ and $d= 2n-2$ are hyperbolic.
\\
\midrule

\cite[Corollary 1.5]{CR19}
&
very general quintic surfaces
$X_5\subseteq \bP^3$ are hyperbolic.
\\
\midrule

\cite[Theorem 3.15]{Yeong25}
&
very general octic fourfold
$X_8\subseteq \bP^5$ are hyperbolic.
\\
\midrule

\Cref{thm:main-intro}
&
very general sextic threefold
$X_6\subseteq \bP^4$ are hyperbolic.\\

\end{longtable}
}

The increasing difficulty of these cases is closely related to their
proximity to the Calabi--Yau threshold. By adjunction, one has
\[
K_{X_d}\ =\ \cO_{X_d}(d-n-1).
\]
Along the critical range
$d=2n-2$, the distance from this threshold is $n-3$. Thus, as the dimension decreases, the positivity available from the canonical bundle becomes progressively weaker. A sextic threefold satisfies $K_X=\cO_X(1)$, making it the most delicate case in the classification.

\subsection*{Conventions and notation}

We adopt the following conventions throughout the paper.

\begin{itemize}
    \item We work over the field $\bC$ of complex numbers.
    \item Following Grothendieck's convention, all projective spaces,
    projective bundles, and Grassmannians parameterize quotients.
    \item For a fixed $n$, we set
$V_d\coloneqq H^0(\bP^n,\cO_{\bP^n}(d))$; this notation will be used primarily for $n=4$.
\end{itemize}

\section{Preliminaries}

\subsection{Deformation framework}\label{sec:deformation}

We recall the deformation framework originating in \cite{Cle86,Ein88}, following the formulation used in
\cite[Section~2]{CR23}.

\smallskip

Suppose that a very general hypersurface of degree $d$ in $\bP^n$
contains an integral curve of geometric genus $g$ and degree $e$. Set $V_d\coloneqq H^0\bigl(\bP^n,\cO_{\bP^n}(d)\bigr)$, and let
\[
\sX^{\univ}
\ \coloneqq\ 
\left\{
(F,x)\in V_d\times\bP^n
\ \bigm|\
F(x)=0
\right\}
\]
be the universal family of hypersurfaces of degree $d$. Denote the two
projections by
\[
p_1\colon\sX^{\univ}\longrightarrow V_d,
\qquad
p_2\colon\sX^{\univ}\longrightarrow\bP^n.
\] Let $\Hilb(\bP^n)$ be the Hilbert scheme of curves in $\bP^n$, and $T\subseteq V_d\times\Hilb(\bP^n)$ be an irreducible locally closed
subvariety satisfying the following properties:
\begin{enumerate}
    \item a general point $(F,[C])\in T$ parametrizes an integral curve
    $C\subseteq X_F$ of geometric genus $g$ and degree $e$;
    \item the projection $T\to V_d$ is dominant;
    \item $T$ is invariant under the natural action of
    $G\coloneqq\GL_{n+1}$.
\end{enumerate} Let
\[
\pi_1\ \colon \ \sX\ \coloneqq\ T\times_{V_d}\sX^{\univ}\ \longrightarrow \ T,
\]
and let $\sC\subseteq\sX$ be the corresponding universal curve. By a standard argument, up to replacing $T$ by a $G$-invariant locally closed subvariety, we may assume that $T\to V_d$ is \'{e}tale and that there exists a smooth projective
family
\[
\varpi\colon\wt{\sC}\ \longrightarrow\  T
\]
of curves of genus $g$, together with a morphism
\[
h\colon\wt{\sC}\ \longrightarrow \ \sC\ \subseteq \ \sX
\]
over $T$, which fiberwise is a normalization of its image. Denote the natural projection by 
\[
\pi_2\colon\sX\longrightarrow\bP^n.
\]
Since $\wt{\sC}$ is $G$-equivariant and $G$ acts transitively on $\bP^n$, the
differential of $\pi_2\circ h$ is surjective. Let $T_{\sX/\bP^n}$ and $T_{\wt{\sC}/\bP^n}$ be the relative tangent bundles. 

For $t\in T$, let $\tC_t$ and $X_t$ be the fibers of $\wt{\sC}$ and $\sX$ over
$t$, respectively, and write
\[
h_t\colon \tC_t\longrightarrow X_t
\]
for the restriction of $h$. We define the normal sheaves
$N_{h/\sX}$ and $N_{h_t/X_t}$ by
\[
\begin{tikzcd}[column sep=small,row sep=small]
0 \arrow[r]
& T_{\wt{\sC}} \arrow[r]
& h^*T_{\sX} \arrow[r]
& N_{h/\sX} \arrow[r]
& 0, \\
0 \arrow[r]
& T_{\tC_t} \arrow[r]
& h_t^*T_{X_t} \arrow[r]
& N_{h_t/X_t} \arrow[r]
& 0.
\end{tikzcd}
\]

Let $\cM_d$ be the Lazarsfeld--Mukai bundle on $\bP^n$ associated with $\cO_{\bP^n}(d)$, defined by
\[
0
\ \longrightarrow  \ \cM_d
\ \longrightarrow \ V_d\otimes\cO_{\bP^n}
\ \longrightarrow\ \cO_{\bP^n}(d)
\ \longrightarrow\ 0.
\]
Its fiber at $q\in\bP^n$ is naturally identified with
$H^0\big(\bP^n,\cI_q\otimes\cO_{\bP^n}(d)\big)$.

\begin{prop}[{\textup{cf. \cite[Proposition~2.1]{CR23}}}]
\label{prop:normal-sheaf-identity}
There are natural isomorphisms
\begin{enumerate}
    \item[\textup{(1)}]
    $N_{h/\sX}|_{C_t}\simeq N_{h_t/X_t}$;
    \item[\textup{(2)}] $T_{\sX/\bP^n}\simeq\pi_2^*\cM_d$;
    \item[\textup{(3)}]
    $N_{h/\sX}\simeq \coker
    \big(T_{\wt{\sC}/\bP^n}
    \to
    h^*T_{\sX/\bP^n}\big)$.
\end{enumerate}
\end{prop}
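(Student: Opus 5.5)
The three isomorphisms are of different natures: (2) is a statement about the universal hypersurface, (1) is a base-change statement, and (3) follows from (2) by a diagram chase. I will prove (2) first, then (3), then (1).

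\emph{Step (2).} I will compute the tangent space of $\sX^{\univ}$ directly. At a point $(F,x)$, the variety $\sX^{\univ}\subseteq V_d\times\bP^n$ is cut out by $F(x)=0$, so
\[
T_{(F,x)}\sX^{\univ}=\{(\dot F,\dot x)\ :\ \dot F(x)+dF_x(\dot x)=0\}.
\]
The kernel of the projection to $T_x\bP^n$ consists of the $\dot F$ with $\dot F(x)=0$. This kernel is exactly $H^0(\bP^n,\cI_x(d))=\cM_{d,x}$. The projection is surjective, since $\dot F$ can absorb any value of $dF_x(\dot x)$. Hence $p_2$ is smooth; in fact $\sX^{\univ}$ is a vector subbundle of $V_d\times\bP^n$ over $\bP^n$, with fiber $\cM_{d,x}$. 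Therefore $T_{\sX^{\univ}/\bP^n}\simeq p_2^*\cM_d$. Because $T\to V_d$ is étale, $\sX\to\sX^{\univ}$ is étale as well, so relative tangent bundles pull back and we obtain $T_{\sX/\bP^n}\simeq\pi_2^*\cM_d$.

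\emph{Step (3).} First, $\pi_2\circ h$ has surjective differential by $G$-equivariance, as noted in the text, so $0\to T_{\wt\sC/\bP^n}\to T_{\wt\sC}\to(\pi_2 h)^*T_{\bP^n}\to0$ is exact. Similarly, $\pi_2$ is smooth by Step (2), giving $0\to h^*T_{\sX/\bP^n}\to h^*T_{\sX}\to (\pi_2h)^*T_{\bP^n}\to0$. The map $T_{\wt\sC}\to h^*T_\sX$ is compatible with these two sequences, and the right-hand terms are identical. By the snake lemma, the cokernels of the left and middle vertical maps agree, so
\[
\coker\bigl(T_{\wt\sC/\bP^n}\to h^*T_{\sX/\bP^n}\bigr)\simeq N_{h/\sX}.
\]
Here I use that $T_{\wt\sC}\to h^*T_\sX$ is injective, since $h$ is generically unramified.

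\emph{Step (1).} Restricting to the fiber $\tC_t$, I use the sequences $0\to T_{\tC_t}\to T_{\wt\sC}|_{\tC_t}\to \varpi^*T_tT\to0$ and $0\to T_{X_t}\to T_\sX|_{X_t}\to \pi_1^*T_tT\to 0$. Both are exact: $\varpi$ is smooth, and $\pi_1$ is smooth near $h(\tC_t)$ for general $t$, after shrinking $T$ if necessary. The two quotients are identified compatibly, since $h$ is a morphism over $T$. The snake lemma then identifies the cokernels $N_{h_t/X_t}\simeq N_{h/\sX}|_{\tC_t}$. This uses the fact that restricting the defining sequence of $N_{h/\sX}$ to a fiber stays exact. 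That holds because $N_{h/\sX}$ is flat over $T$, which follows after shrinking $T$ by generic flatness.

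\textbf{Main obstacle.} The delicate point is the base change in (1). The cokernel sheaves may have torsion where $h_t$ is ramified, so exactness after restriction needs either generic flatness or a Tor-vanishing argument; I would handle this by shrinking $T$. The smoothness of $\pi_1$ along the image, which Step (1) also needs, is provided by taking $t$ general.
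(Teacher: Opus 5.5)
Your argument is correct and is the standard proof behind the cited \cite[Proposition~2.1]{CR23}; the paper itself gives no proof. As there: (2) comes from $\sX^{\univ}\to\bP^n$ being the total space of $\cM_d$ together with $\sX\to\sX^{\univ}$ being \'{e}tale, and (3) and (1) come from the snake lemma applied to the tangent sequences relative to $\bP^n$ and to $T$. The one point you overestimate is the ``obstacle'' in (1): no flatness or generic-flatness shrinking is needed, because restriction to $\tC_t$ is right exact, so $N_{h/\sX}|_{\tC_t}=\coker(dh|_{\tC_t})$, and the snake lemma on the fiberwise sequences already identifies this with $N_{h_t/X_t}$ (it even shows $\ker(dh|_{\tC_t})\simeq\ker(dh_t)=0$).
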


The multiplication maps give a commutative diagram
\[
\begin{tikzcd}[ampersand replacement=\&]
0 \arrow[r]
  \& \cM_1\otimes V_{d-1} \arrow[r] \arrow[d]
  \& V_1\otimes V_{d-1}\otimes\cO_{\bP^n} \arrow[r] \arrow[d]
  \& \cO_{\bP^n}(1)\otimes V_{d-1} \arrow[d]\arrow[r] \&0 \\
0 \arrow[r]
  \& \cM_d \arrow[r]
  \& V_d\otimes\cO_{\bP^n} \arrow[r]
  \& \cO_{\bP^n}(d)\arrow[r] \& 0.
\end{tikzcd}
\]

\begin{lemma}\label{lem:surjection}
For every $d\geq 1$, the natural morphism
\[
\cM_1\otimes V_{d-1}\
\longrightarrow\  \cM_d
\]
is surjective.
\end{lemma}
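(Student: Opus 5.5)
The plan is to check surjectivity fiberwise, since both sheaves are vector bundles and a map of vector bundles is surjective exactly when it is surjective on the fiber at every point. Fix $q\in\bP^n$. By the identification recalled above, the fiber of $\cM_d$ at $q$ is $H^0\bigl(\bP^n,\cI_q\otimes\cO_{\bP^n}(d)\bigr)$, the space of degree $d$ forms vanishing at $q$. Likewise the fiber of $\cM_1$ at $q$ is the space of linear forms vanishing at $q$. From the commutative diagram, the fiber map is multiplication
\[
H^0\bigl(\cI_q(1)\bigr)\otimes V_{d-1}\longrightarrow H^0\bigl(\cI_q(d)\bigr),\qquad \ell\otimes G\longmapsto \ell G .
\]
So the statement reduces to a linear-algebra claim: every form of degree $d$ vanishing at $q$ can be written as $\sum \ell_i G_i$, where the $\ell_i$ are linear forms vanishing at $q$.

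To prove this, use the $\GL_{n+1}$-action, which is transitive, and move $q$ to $[1:0:\dots:0]$. Then $H^0(\cI_q(1))$ is spanned by $x_1,\dots,x_n$. A form $F$ of degree $d$ vanishes at $q$ exactly when the coefficient of $x_0^d$ in $F$ is zero. In that case every monomial occurring in $F$ contains some $x_i$ with $i\geq 1$. Hence we can write $F=\sum_{i=1}^n x_iG_i$ with $\deg G_i=d-1$, by assigning each monomial to one of the variables $x_i$ ($i\ge1$) that divides it. This gives surjectivity on the fiber at $q$.

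The case $d=1$ is trivial: there $V_0=\bC$ and the map is the identity. No step here is a genuine obstacle. The only point needing care is that the fiber of the map of bundles really is the multiplication map described above. This holds because the left vertical arrows are induced from the multiplication $V_1\otimes V_{d-1}\to V_d$, and the bottom rows of the diagram are exact, so the fibers of the kernels $\cM_1\otimes V_{d-1}$ and $\cM_d$ are the kernels of evaluation at $q$.
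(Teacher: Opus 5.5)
Your proof is correct and follows the same route as the paper. Both reduce to the fiberwise multiplication map $H^0\bigl(\bP^n,\cI_q(1)\bigr)\otimes V_{d-1}\to H^0\bigl(\bP^n,\cI_q(d)\bigr)$ and use that the ideal of a point is generated in degree one; you just verify that generation step explicitly in coordinates with $q=[1:0:\dots:0]$, where the paper simply asserts it.
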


\begin{proof}
At a point $q\in\bP^n$, the induced map on fibers is the multiplication
map
\[
H^0\bigl(\bP^n,\cI_q(1)\bigr)\otimes
H^0\bigl(\bP^n,\cO_{\bP^n}(d-1)\bigr)
\longrightarrow
H^0\bigl(\bP^n,\cI_q(d)\bigr).
\]
This map is surjective because the homogeneous ideal $\cI_q$ is generated
in degree one. 
\end{proof}

\subsection{Geometry of very general sextic threefolds}

We establish two basic properties of a very general sextic threefold that will be used repeatedly throughout the paper.

\begin{lemma}\label{lem:no-low-degree-plane-curve}
Every plane section of a very general sextic threefold $X\subseteq\bP^4$
is integral.
\end{lemma}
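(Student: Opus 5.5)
A plane section $X\cap\Lambda$ is a plane sextic curve, and it fails to be integral exactly when the restriction $F|_\Lambda$ factors as a product of forms of degrees $a$ and $6-a$ with $1\le a\le 3$, or is a multiple of a square. The plan is a parameter count: I will show that, for each such factorization type, the set of sextic forms $F$ admitting some plane $\Lambda$ on which $F|_\Lambda$ has that type has codimension at least one in $V_6$. A very general (indeed a general) $F$ then avoids the finite union of these proper closed subsets.

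Fix $1\le a\le 3$ and consider the incidence variety
\[
I_a\coloneqq\bigl\{(F,\Lambda)\in V_6\times\bG(2,\bP^4)\ \bigm|\ F|_\Lambda \text{ is reducible with a factor of degree } a\bigr\}.
\]
The fiber of $I_a$ over a fixed $\Lambda$ is described as follows. The restriction map $V_6\to H^0(\Lambda,\cO_\Lambda(6))$ is surjective. Its target has dimension $\binom{8}{2}=28$. The locus of reducible sextics $G_1G_2$ with $\deg G_1=a$ inside it has dimension
\[
\bigl(\tbinom{a+2}{2}-1\bigr)+\tbinom{8-a}{2},
\]
that is, $2+21=23$ for $a=1$, $5+15=20$ for $a=2$, and $9+10=19$ for $a=3$. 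Non-reduced sextics are contained in these loci, since a square factor has degree at most $3$. Hence the fiber of $I_a$ over $\Lambda$ has codimension at least $28-23=5$ in $V_6$. Since $\dim\bG(2,\bP^4)=6$, we get
\[
\dim I_a\le \dim V_6-5+6=\dim V_6+1.
\]
This naive count therefore does not show that $I_a\to V_6$ fails to be dominant.

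This is the main obstacle: the case $a=1$, namely planes $\Lambda$ such that $X\cap\Lambda$ contains a line. The count must be sharpened by fibering over lines instead of over planes. The key condition is that $X$ contains a line $L$, and lines in $\bP^4$ form a $6$-dimensional family. The condition $L\subseteq X$ imposes $h^0(\cO_L(6))=7$ conditions on $F$, independent for each fixed $L$. Thus the locus of sextics containing some line has dimension at most $\dim V_6-7+6=\dim V_6-1$, so it is a proper subset, and a general $X$ contains no line (this is classical; compare \cite{BVdV78}, where lines exist only when $d\le 2n-3=5$).

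For $a=2$ and $a=3$, I fiber similarly over the plane curve $D$ of degree $a$ contained in $X$. Plane conics form a family of dimension $6+5=11$, and containment imposes $h^0(\cO_D(6))=13$ conditions on $F$, which are independent by surjectivity of $V_6\to H^0(\Lambda,\cO_\Lambda(6))\to H^0(D,\cO_D(6))$. This gives codimension at least $2$. Plane cubics form a family of dimension $6+9=15$, and containment imposes $h^0(\cO_D(6))=18$ conditions, giving codimension at least $3$. Conics and cubics that are themselves reducible or non-reduced are handled by their line or conic components, so the degree-$1$ and degree-$2$ cases suffice for them.

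Consequently the union of the three bad loci is a proper closed subset of $V_6$, and every plane section of a very general $X$ is integral.
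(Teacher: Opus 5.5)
Your proposal is correct and is essentially the paper's argument. Both reduce to showing that $X$ contains no plane curve of degree at most $3$, then compare the dimensions $6$, $11$, $15$ of the families of lines, plane conics and plane cubics in $\bP^4$ with the $7$, $13$, $18$ independent conditions each imposes on sextics. Your preliminary count over planes, and your separate treatment of reducible conics and cubics, are harmless but unnecessary, since the count is uniform over all plane curves of each degree.
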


\begin{proof}
It suffices to show that a very general sextic
threefold contains no plane curve of degree at most $3$. The families of lines, plane conics, and plane cubics in $\bP^4$ have dimensions $6$, $11$, and $15$, respectively, whereas containing a fixed
such curve imposes $7$, $13$, and $18$ independent linear conditions on a sextic hypersurface. The standard incidence correspondence therefore shows that a very general sextic threefold contains none of these curves.
\end{proof}

\begin{prop}\label{prop:normalization-of-plane-curve}
For a very general sextic threefold $X\subseteq\bP^4$, every plane section
$X\cap\Lambda$ has geometric genus at least $4$.
\end{prop}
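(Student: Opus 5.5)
The plan is a dimension count: an incidence correspondence over the Grassmannian of planes $\bG(2,\bP^4)$, combined with Zariski's bound on the dimension of equigeneric families of plane curves. By \Cref{lem:no-low-degree-plane-curve}, every plane section $X\cap\Lambda$ of a very general sextic $X$ is an integral plane sextic, so its geometric genus $g$ is well defined. Its arithmetic genus is $\binom{5}{2}=10$, so $g=10-\delta$, where $\delta$ is the total $\delta$-invariant of the singularities. We must rule out $g\leq 3$.

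\textbf{Step 1 (equigeneric bound).} Let $W_g\subseteq \bP(H^0(\bP^2,\cO(6)))\simeq\bP^{27}$ be the locus of integral plane sextics of geometric genus at most $g$. I will invoke Zariski's theorem (see also Diaz--Harris): every irreducible family of integral plane curves of degree $d$ and geometric genus $g$ has dimension at most $3d+g-1$. Applied with $d=6$, this gives $\dim W_g\leq 17+g$, so the codimension of $W_g$ in $\bP^{27}$ is at least $10-g$. For $g\leq 3$ the codimension is therefore at least $7$. The proof of the bound is a deformation-theoretic estimate: tangent vectors to an equigeneric family lift to sections of the normal sheaf of the normalization map, which is a line bundle of degree $3d+2g-2$ on a genus-$g$ curve, and one computes $h^0$ of that bundle.

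\textbf{Step 2 (incidence).} Consider
\[
I\ \coloneqq\ \bigl\{(F,\Lambda)\in V_6\times\bG(2,\bP^4)\ \bigm|\ F|_\Lambda \text{ is integral of geometric genus }\leq 3\bigr\}.
\]
For each fixed plane $\Lambda$, the restriction map $V_6\to H^0(\Lambda,\cO_\Lambda(6))$ is surjective and linear. So the fiber of $I$ over $[\Lambda]$ is the preimage of the cone over $W_3$, which has codimension at least $7$ in $V_6$. Since $\dim\bG(2,\bP^4)=6$, we get
\[
\dim I\ \leq\ \dim V_6-7+6\ <\ \dim V_6 .
\]
Hence the projection $I\to V_6$ is not dominant. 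Here $I$ is constructible, or one can take the closure of each stratum, so the image lies in a countable (in fact finite) union of proper closed subsets. A very general $F$ avoids this image, and then every plane section of $X_F$ has geometric genus at least $4$.

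\textbf{Main obstacle.} The only nontrivial input is the dimension bound in Step 1 for curves with arbitrary, possibly non-nodal, singularities. For nodal curves it is the classical Severi variety count. In general I would cite Zariski's theorem that a general member of any equigeneric family is nodal, or equivalently the bound $\dim\le -K_{\bP^2}\cdot C+g-1$. The margin is tight: the codimension for $g=3$ is exactly $7=\dim\bG+1$. So the bound must be sharp, and the Step 2 argument must use the surjectivity of restriction to every plane, which holds uniformly.
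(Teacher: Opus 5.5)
Your proposal is correct and takes essentially the same route as the paper: the locus of integral plane sextics of geometric genus at most $3$ has codimension at least $10-3=7$, which exceeds the $6$-dimensional family of planes in an incidence count. The only difference is that you state explicitly the input the paper leaves implicit, namely Zariski's bound $\dim\leq 3d+g-1$ for families of integral plane curves of degree $d$ and geometric genus $g$.
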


\begin{proof}
By \Cref{lem:no-low-degree-plane-curve}, every plane section is integral.
A plane sextic has arithmetic genus $10$. The locus of integral plane
sextics of geometric genus at most $3$ has codimension at least
$10-3=7$ in the space of plane sextics. Since the family of planes in
$\bP^4$ has dimension $6$, a standard incidence correspondence argument and dimension count show that a very
general sextic threefold has no such plane section. Hence every plane section has geometric genus at least $4$.
\end{proof}

\subsection{Fibered surfaces of log general type}

We record a result on the geometry of fibered surfaces of general type that will be used in \Cref{sec:moving-plane-case}. 

\begin{lemma}[\textup{cf. \cite{Tan95}}]\label{lem:log-BMY}
Let $\pi\colon S\longrightarrow C$ be a relatively minimal semistable fibration from a smooth projective surface to a smooth projective curve of genus $g\geq 2$, whose
fibers have arithmetic genus $h\geq 2$. Let $\Sigma\subseteq S$ be a section of $\pi$, and let $F_1,\ldots,F_s$ be the singular
fibers. Then
\[
(K_{S/C}\cdot\Sigma)
\ \leq\
(2h-1)\bigl(2g-2+s\bigr)-K_{S/C}^2.
\] In particular, one has $(K_{S/C}\cdot\Sigma)
\ \leq\
(2h-1)\bigl(2g-2+s\bigr)$.
\end{lemma}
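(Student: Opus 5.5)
We will deduce the inequality from Tan's logarithmic Bogomolov--Miyaoka--Yau type bound for sections of semistable fibrations. The idea is to compare the section $\Sigma$ with the relative canonical divisor using the log canonical divisor of the open surface obtained by removing the singular fibers.

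\textbf{Step 1.} Set $D\coloneqq \Sigma+\sum_{i=1}^s F_i$ and work with the pair $(S,D)$. Semistability makes each $F_i$ reduced with only nodes, so $D$ is a reduced normal crossing divisor. Indeed, $\Sigma$ meets each fiber transversally at one smooth point, and nodes of the fibers lie off $\Sigma$.

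\textbf{Step 2.} Since $g\geq 2$ and $h\geq 2$ and the fibration is relatively minimal and semistable, $K_S+D$ is nef and big. This lets us apply the logarithmic BMY inequality in Miyaoka's form, with Sakai's correction allowing a $(-2)$-configuration. Alternatively, we invoke Tan's direct result: for a section $\Sigma$ of a semistable fibration,
\[
-\Sigma^2\ \leq\ (2h-1)(2g-2+s)-K_{S/C}^2 .
\]
This is the content of \cite{Tan95}, obtained by applying log BMY to $(S,\Sigma+\sum F_i)$ after contracting nothing. The computation runs as follows. We have $(K_S+D)^2\le 3e(S\setminus D)$. Here $e(S\setminus D)=e(S)-e(D)=(2-2g)(2-2h)+\delta-\bigl(2-2g+s(2-2h)+\delta\bigr)+\ldots$, where $\delta$ counts nodes. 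Expanding with Noether's formula, $K_S=K_{S/C}+(2g-2)F$, adjunction $\Sigma^2=-(K_S\cdot\Sigma)+2g-2$, and $(\Sigma\cdot F)=1$ yields the displayed inequality.

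\textbf{Step 3.} By adjunction on $\Sigma\simeq C$ we have $(K_{S/C}\cdot\Sigma)=-\Sigma^2$, which converts the Step 2 bound into the first claimed inequality. For the second inequality, Arakelov's theorem gives that $K_{S/C}$ is nef for a relatively minimal fibration with $h\ge1$, so $K_{S/C}^2\geq 0$ and we may drop that term.

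The main obstacle is the bookkeeping in Step 2. One must get the Euler characteristic of the open part and the coefficient $(2h-1)$ exactly right. One must also justify applying log BMY when $K_S+D$ may fail to be ample because of $(-2)$-curves in fibers. If the direct computation becomes messy, I would simply cite \cite{Tan95} for the inequality $-\Sigma^2\le (2h-1)(2g-2+s)-K_{S/C}^2$ and carry out only Step 3.
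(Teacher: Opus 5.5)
Your proposal is correct and takes the same route as the paper. Both arguments apply log BMY to the pair $(S,\Sigma+\sum_i F_i)$, use the adjunction identity $(K_{S/C}\cdot\Sigma)=-\Sigma^2$, and drop the $K_{S/C}^2$ term by Arakelov nefness.

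The one loose end is your Euler number bookkeeping: inclusion--exclusion for $e(D)$ needs an extra $+s$ for the $s$ points of $\Sigma\cap F_i$. The paper avoids this by noting that $S\setminus D$ fibers locally trivially over $C$ minus the $s$ points $\pi(F_i)$, with fiber a once-punctured genus-$h$ curve. This gives $e(S\setminus D)=(2-2g-s)(1-2h)=(2h-1)(2g-2+s)$ directly, with no need for Noether's formula.
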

\begin{proof}
Since $g\geq 2$, every rational curve on $S$ is contracted by $\pi$. Thus, by relative minimality and the cone theorem, $K_S=K_{S/C}+\pi^*K_C$ is nef. By the Arakelov--Beauville nefness theorem (cf. \cites{Ara71,Deb82}) $K_{S/C}$ is also nef. Therefore one has $(K_S^2)>0$ and $S$ is a surface of general type. We set
\[
p_i\ \coloneqq\ \pi(F_i),\qquad
D\ \coloneqq\ \Sigma+\sum_{i=1}^s F_i,
\qquad
d\ \coloneqq\ \deg\Bigl(K_C+\sum_{i=1}^s p_i\Bigr)
\ =\ 2g-2+s.
\]
Then $D$ is a semistable curve and $(S,D)$ is a log-smooth pair. By adjunction, one has
\[
(K_{S/C}\cdot\Sigma)+\Sigma^2
\ =\ \deg K_{\Sigma}-\deg K_C
\ =\ 0.
\]
By definition, one has
\[
K_S+D
\ =\
K_{S/C}+\Sigma+
\pi^*\Bigl(K_C+\sum_{i=1}^s p_i\Bigr).
\]
It follows that
\[
\begin{aligned}
(K_S+D)^2
&\ =\
\Bigl(
K_{S/C}+\Sigma+
\pi^*\Bigl(K_C+\sum_{i=1}^s p_i\Bigr)
\Bigr)^2\\
&\ =\
K_{S/C}^2+\Sigma^2+2(K_{S/C}\cdot\Sigma)
+2(2h-2)d+2d\\
&\ =\
K_{S/C}^2+(K_{S/C}\cdot\Sigma)+2(2h-1)d.
\end{aligned}
\]
By the logarithmic Bogomolov--Miyaoka--Yau inequality (cf. \cite[Theorem~7.6]{Sak80}), one has
\[
(K_S+D)^2
\ \leq\
3c_2\bigl(\Omega_S^1(\log D)\bigr).
\]
By the logarithmic Gauss--Bonnet formula,
\[
c_2\bigl(\Omega_S^1(\log D)\bigr)
\ =\
\chi_{\mathrm{top}}(S\setminus D)
\ =\ (2-2g-s)(1-2h)
\ =\ d(2h-1),
\]
since $S\setminus D\to C\setminus\{p_1,\ldots,p_s\}$ is a topologically locally trivial smooth fibration. Combining these equalities and inequalities, one obtains
\[
K_{S/C}^2+(K_{S/C}\cdot\Sigma)+2(2h-1)d
\ \leq\
3(2h-1)d,
\]
and therefore the desired inequality. The last statement follows immediately from the nefness of $K_{S/C}$.
\end{proof}

\bigskip

\section{Deformation and Scroll construction}

In this section, we specialize the construction of
\Cref{sec:deformation} to sextic threefolds. For a very general sextic threefold $X$ and every curve on $X$, we
associate a family of linear subspaces of $\bP^4$ of dimension either $1$ or $2$. We prove the desired hyperbolicity in the
line scroll case and reduce the plane scroll case to its generic case, which we will treat in \Cref{sec:moving-plane-case}.

\medskip

Retain the notation as in \Cref{sec:deformation} with $n=4$ and $d=6$. Let $t\in T$ be a very general point, $X$ (resp. $C$ and $\tC$) be the fiber $\sX_t$ (resp. $\sC_t$ and $\wt{\sC}_t$). Denote by $f:\tC\to X$ the morphism $h_t:\wt{\sC}_t\to \sX_t$, and set 
\[\cM\coloneqq f^*\cM_1,\qquad
\cL\coloneqq f^*\cO_{\bP^4}(1),\qquad
V \coloneqq H^0(\bP^4, \cO_{\bP^4}(1))\]

By \Cref{prop:normal-sheaf-identity,lem:surjection} one has a natural surjective morphism
\[
\alpha : \cM\otimes_{\bC} V_5 \ \longrightarrow \  N_{f / X}.
\]
As $N_{h/X}$ is of rank $2$, for a general $F_5\in V_5$, the image of $\alpha_{F_5}:\cM\to N_{h/X}$, denoted by $\cN$, is of rank either $1$ or $2$. Let $\cK$ be the kernel of $\cM\to \cN$. Then one has the following commutative diagram of exact sequences
\begin{equation}\label{eq:quotient-bundle}
    \begin{tikzcd}[ampersand replacement=\&]
	\& \cK \& \cK \&\& \\
	0 \& \cM \& {V\otimes\cO_{\tC}} \& \cL \& 0 \\
	0 \& \cN \& \cE \& \cL \& 0
	\arrow["{=}"{description}, draw=none, from=1-2, to=1-3]
	\arrow[from=1-2, to=2-2]
	\arrow[from=1-3, to=2-3]
	\arrow[from=2-1, to=2-2]
	\arrow[from=2-2, to=2-3]
	\arrow[two heads, from=2-2, to=3-2]
	\arrow[from=2-3, to=2-4]
	\arrow[two heads, from=2-3, to=3-3]
	\arrow[from=2-4, to=2-5]
	\arrow["{=}"{marking, allow upside down}, draw=none, from=2-4, to=3-4]
	\arrow[from=3-1, to=3-2]
	\arrow[from=3-2, to=3-3]
	\arrow[from=3-3, to=3-4]
	\arrow[from=3-4, to=3-5]
\end{tikzcd}
\end{equation} for some locally free sheaf $\cE$ on $\tC$. In particular, $\cE$ gives rise to a morphism \[\phi:\tC \ \longrightarrow \ \bG\coloneqq \Gr(V,\rk \cE). \]

\subsection{Rank one quotient and surface scroll}\label{sec:line-scroll}

In this subsection, we treat the case $\rk \cE=2$, i.e. when $\cN=\im\big(\alpha_{F_5}:\cM\to N_{f/X}\big)$ is of rank $1$ for a general $F_5$. This case was stated in
\cite[Remark~3.5]{Yeong25}, using methods similar to those adopted there. For the reader's convenience, we provide a proof in the specific case of sextic threefolds.

\smallskip

Let $p \in \tC$ be a general point, $q \coloneqq f(p) \in X$, and $\ell_p \subseteq \bP^4$ be the line represented by $\phi(p)$. Then $\ell_p$ is cut out by linear forms in $\cK_p$. By \cite[Lemma 2.2(i)]{Cle03}, the kernel $\cK_p$ is independent of the choice of quintic polynomial $F_5$.

\begin{lemma}
\label{lem:ideal-in-relative-tangent}
The space
$H^0\bigl(\bP^4,\cI_{\ell_p}(6)\bigr)$
is naturally contained in $T_{\wt{\sC}/\bP^4}|_{(t,p)}$.
\end{lemma}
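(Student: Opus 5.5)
Here $H^0(\bP^4,\cI_{\ell_p}(6))$ sits inside the fiber $(\cM_6)_q = H^0(\bP^4,\cI_q(6))$, where $q=f(p)\in\ell_p$. By \Cref{prop:normal-sheaf-identity}(2), that fiber is $T_{\sX/\bP^4}|_{(t,q)}$. The claim therefore means the following: under the composite
\[
H^0\bigl(\bP^4,\cI_{\ell_p}(6)\bigr)\hookrightarrow (\cM_6)_q\simeq h^*T_{\sX/\bP^4}|_{(t,p)}\longrightarrow N_{h/\sX}|_{(t,p)},
\]
the image vanishes. Then, by exactness of $0\to T_{\wt{\sC}/\bP^4}\to h^*T_{\sX/\bP^4}\to N_{h/\sX}\to 0$ from \Cref{prop:normal-sheaf-identity}(3), the space lies in $T_{\wt{\sC}/\bP^4}|_{(t,p)}$. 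This sequence is exact on fibers at a general point $p$, where the sheaves are locally free and the cokernel is locally free. So the plan is to show that every sextic vanishing on $\ell_p$ maps to zero in $N_{f/X}|_p$.

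First, write the ideal in degree six. Since $\ell_p$ is cut out by the linear forms in $\cK_p\subseteq (\cM_1)_q = H^0(\cI_q(1))$, and its homogeneous ideal is generated by these three linear forms, every $G\in H^0(\cI_{\ell_p}(6))$ can be written as $G=\sum_i L_i F^{(i)}$ with $L_i\in\cK_p$ and $F^{(i)}\in V_5$. So $G$ is the image of $\sum_i L_i\otimes F^{(i)}\in (\cM_1)_q\otimes V_5$ under the surjection of \Cref{lem:surjection}. Compatibility of $\alpha$ with the multiplication diagram then gives $G\mapsto\sum_i\alpha_{F^{(i)}}(L_i)$ in $N_{f/X}|_p$.

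Second, show that $\alpha_{F_5}(L)=0$ at $p$ for every $L\in\cK_p$ and every $F_5\in V_5$, not just a general one. This is exactly where the independence of $\cK_p$ from $F_5$ (\cite[Lemma 2.2(i)]{Cle03}) enters. For general $F_5$ the map $\alpha_{F_5}$ kills $\cK$ by definition, and $\cK_p$ is the same subspace for all such $F_5$. The condition ``$\alpha_{F_5}$ kills $\cK_p$'' is linear in $F_5$ (bilinearity of $\alpha$) and closed. Since it holds on a dense open subset of $V_5$, it holds on all of $V_5$. Hence each $\alpha_{F^{(i)}}(L_i)=0$, so $G$ maps to zero in the normal space, which proves the inclusion.

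The main obstacle is making the fiberwise statements at $p$ honest. The kernel $\cK$ is defined as a subsheaf, and one needs $p$ general so that $\cK_p$ is the fiber of a subbundle and the sequence for $N_{h/\sX}$ restricts exactly to fibers. One also needs to check that the identification of \Cref{prop:normal-sheaf-identity}(2),(3) is compatible with the multiplication map used to define $\alpha$. I would handle this by taking $p$ outside the finitely many points where $\cN$ or $N_{f/X}$ fail to be locally free or the ranks jump, and by noting that $\alpha$ is, by construction, the composite of the multiplication map with the quotient to the normal sheaf.
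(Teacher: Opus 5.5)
Your proposal is correct and follows the paper's own proof. You identify $N_{f/X}|_p$ with $(\cM_6)_q/T_{\wt{\sC}/\bP^4}|_{(t,p)}$, show that the image of the multiplication map $\cK_p\otimes V_5\to(\cM_6)_q$ lies in $T_{\wt{\sC}/\bP^4}|_{(t,p)}$, and note that this image is exactly $H^0(\bP^4,\cI_{\ell_p}(6))$. The extra steps you include make explicit what the paper's short proof leaves implicit: passing from a general $F_5$ to all of $V_5$ by linearity and the independence of $\cK_p$, and taking $p$ general so that the sequences are exact on fibers.
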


\begin{proof}
By \Cref{prop:normal-sheaf-identity}, one has
\[
(N_{f/X})_p
\ =\
(N_{h_t/X_t})_p
\ \simeq\
(\cM_6)_q\big/T_{\wt{\sC}/\bP^4}|_{(t,p)}.
\] For a general $F_5\in V_5$, multiplication by $F_5$ induces a map $(\cM_1)_q\to(\cM_6)_q$. Since
$\cK_p$ is the kernel of the induced map
$\cM_p\to(N_{f/X})_p$, the image of the multiplication map
\[
\cK_p\otimes V_5
\ \simeq\ 
H^0\bigl(\bP^4,\cI_{\ell_p}(1)\bigr)
\otimes H^0\bigl(\bP^4,\cO_{\bP^4}(5)\bigr)
\ \longrightarrow\ 
(\cM_6)_q
\ =\ 
H^0\bigl(\bP^4,\cI_q(6)\bigr)
\]
is contained in $T_{\sC/\bP^4}|_{(t,p)}$. This image is precisely
$H^0\bigl(\bP^4,\cI_{\ell_p}(6)\bigr)$.
\end{proof}

To summarize, there is a commutative diagram of exact sequences: \begin{equation}\label{eq:comm-diag}
    \begin{tikzcd}[ampersand replacement=\&]
	\&\&\& {H^0\big(\ell,\cI_{q}(6)\big)} \\
	{T_{\wt{\sC}/\bP^4}|_{(t,p)}} \&\& {(\cM_6)_q\simeq H^0\big(\bP^4,\cI_q(6)\big)} \& {N_{f/X}|_p} \\
	\& {H^0\big(\bP^4,\cI_{\ell}(6)\big)} \\
	\& {\cK_p\simeq H^0\big(\bP^4,\cI_{\ell}(1)\big)} \& {\cM_p\simeq H^0\big(\bP^4,\cI_{q}(1)\big)} \& {\cN_p}
	\arrow["\beta"{description}, from=2-1, to=1-4]
	\arrow[hook, from=2-1, to=2-3]
	\arrow[two heads, from=2-3, to=1-4]
	\arrow[two heads, from=2-3, to=2-4]
	\arrow[dashed, hook', from=3-2, to=2-1]
	\arrow[hook, from=3-2, to=2-3]
	\arrow[dotted, from=4-2, to=2-1]
	\arrow["{\cdot F_5}", from=4-2, to=3-2]
	\arrow[hook, from=4-2, to=4-3]
	\arrow["{\cdot F_5}", from=4-3, to=2-3]
	\arrow[two heads, from=4-3, to=4-4]
	\arrow[from=4-4, to=2-4]
\end{tikzcd}.\end{equation} Here the middle and bottom rows and the diagonal sequence are exact.
\begin{comment}
    We will prove that $\ell_p$ is an osculating line of the hypersurface $X_t$ at the point $q$. Hence, the image of a general point $p \in C_t$ is contained in the $1$-osculating locus of $\Lambda_{X_t}$ the hypersurface
\[
\Lambda_{X_t} = \bigl\{x \in X_t:
 \text{ there exists a line } \ell \text{ s.t. } \ell \cap X_t = 6 \cdot x \bigr\}.
\]
Since the $1$-osculating locus is closed, $h_t(C_t)$ is contained in $\Lambda_{X_t}$. The genus bound of the curve $C_t$ will then follow using Pacienza's technique \cite[Section 4.1]{Pac03}.
\end{comment}

\begin{lemma}[\textup{cf. \cite[Lemma~2.11]{CR23}}]
\label{lem:fiber-of-restriction} Let
$Z\subseteq\bP^4$ be a subvariety containing $q$. Suppose that, under the
natural map
\[
T_{\wt{\sC}/\bP^4}|_{(t,p)}
\ \longrightarrow\ 
T_{\sX/\bP^4}|_{(t,q)}
\ \simeq\  H^0\bigl(\bP^4,\cI_q(6)\bigr),
\]
the image contains $H^0\bigl(\bP^4,\cI_Z(6)\bigr)$. Then
\[
(t,q)+H^0\bigl(\bP^4,\cI_Z(6)\bigr)
\ \subseteq\  h(\wt{\sC}).
\]
\end{lemma}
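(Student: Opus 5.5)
The idea is to use the $G$-equivariance of $\wt{\sC}$ together with the linear structure of the fibers of $\sX\to\bP^4$. Fix $q\in\bP^4$. The fiber of $\pi_2\colon\sX\to\bP^4$ over $q$ is, locally, the affine space $\{F\in V_d: F(q)=0\}=H^0(\bP^4,\cI_q(6))$, translated inside $T$ by the étale map $T\to V_d$. Its tangent space at $(t,q)$ is $T_{\sX/\bP^4}|_{(t,q)}\simeq H^0(\bP^4,\cI_q(6))$, as in \Cref{prop:normal-sheaf-identity}(2). So the statement claims that an affine linear subspace of this fiber, whose direction is tangent to the image of $h$ at $(t,q)$, lies entirely in $h(\wt{\sC})$.

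\emph{Step 1.} Consider the stabilizer $G_Z\subseteq G=\GL_5$ of $Z$ (and of $q$); it acts on the fibers of $\pi_2$ over points of $Z$. I would use the larger group structure instead: the subspace $W\coloneqq H^0(\bP^4,\cI_Z(6))$ acts on $V_6$ by translation, preserving the fiber $\pi_2^{-1}(q)$, and $W$ is stable under $G_Z$. Define $Y\coloneqq h(\wt{\sC})\cap\pi_2^{-1}(q)$ near $(t,q)$. Since $t$ is very general and $\wt{\sC}$ is $G$-invariant, $(t,p)$ is a general point of $\wt{\sC}$. By generic smoothness, after shrinking, $\pi_2\circ h$ is smooth at $(t,p)$, so $Y$ is smooth of dimension $\dim \wt{\sC}-4$ at $(t,q)$, with tangent space equal to the image of $T_{\wt{\sC}/\bP^4}|_{(t,p)}$.

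\emph{Step 2.} By hypothesis $W\subseteq T_{(t,q)}Y$. Since $(t,p)$ is general, the same inclusion $W_{q'}\coloneqq H^0(\bP^4,\cI_{Z'}(6))\subseteq T_{y}Y'$ holds at every nearby general point $y$ of the relevant fibers, where $Z'=g Z$ is the corresponding translate. Here I would use $G$-equivariance together with the genericity argument of \cite[Lemma~2.11]{CR23}: the locus where the tangent space contains the translate-direction is closed and contains a general point. In particular, at all points $y=(t',q)$ of $Y$ near $(t,q)$, the constant vector field on $\pi_2^{-1}(q)$ in each direction $w\in W$ is tangent to $Y$.

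\emph{Step 3.} A constant vector field $w$ on the affine space $\pi_2^{-1}(q)$ that is tangent to the subvariety $Y$ at every point of an open subset has integral curves $y+sw$ staying in $Y$ for small $s$. Hence $Y$ is locally invariant under translation by $W$. Taking closures, and using that $h(\wt{\sC})$ is closed in $\sX$ over $T$ (properness of $\varpi$), the whole affine space $(t,q)+W$ lies in the closure of $h(\wt{\sC})$, and hence in $h(\wt{\sC})$ after the standard identification.

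\emph{Main obstacle.} The hard step is Step 2, upgrading tangency at the single point $(t,p)$ to tangency along all of $Y$ near $(t,q)$ with the \emph{same} $W$, since $Z$ is tied to $p$. I would first reduce to the case where $Z$ is determined functorially by the data, for example $Z=\ell_p$ as in \Cref{lem:ideal-in-relative-tangent}. Then the inclusion holds for all general points by the same argument, and one follows the proof of \cite[Lemma~2.11]{CR23} verbatim.
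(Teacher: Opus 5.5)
The paper does not prove this lemma itself; it only cites \cite[Lemma~2.11]{CR23}. Your outline does not replace that reference, because Step~2 does not go through. You correctly identify Step~2 as the crux.

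The hypothesis is a tangency statement at the single point $(t,q)$ for the single subspace $W=H^0(\bP^4,\cI_Z(6))$. Genericity and $G$-equivariance transport it to other general points $y$ of $Y=h(\wt{\sC})\cap\pi_2^{-1}(q)$ only in the form $H^0(\bP^4,\cI_{Z_y}(6))\subseteq T_yY$, where $Z_y$ is attached to $y$. They never give $W\subseteq T_yY$ for $y\neq(t,q)$.

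In the intended case $Z=\ell_p$ this difference is real. The connected group $\Stab_G(q)$ preserves each component of $Y$ and acts transitively on the lines through $q$, so $y\mapsto\ell_y$ is non-constant near $(t,q)$. When $\ell_y\neq\ell$, the hypothesis at $y$ says nothing about whether a general $w\in W$, which does not vanish on $\ell_y$, lies in $T_yY$.

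Your ``closed locus'' remark conflates two conditions. The condition $T_yY\supseteq H^0(\cI_{Z_y}(6))$ does hold on a dense open set, but it is a varying family of subspaces. The condition $T_yY\supseteq W$ for the fixed $W$ is known at one point only. So Step~3 has no constant vector field tangent to $Y$ on an open set to integrate, and tangency at a single point does not integrate.

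To close the argument you need a genuinely new input. One option is to pass to the sublocus $Y_Z=\{y\in Y:\ Z_y=Z\}$ through $(t,q)$ and show that $W\subseteq T_yY_Z$ for general $y\in Y_Z$. This says that first-order deformations of the sextic in the directions of $H^0(\cI_Z(6))$ do not move $Z_y$. Then the constant fields $w\in W$ stay inside the locus where tangency is known, and your Step~3 gives $y+W\subseteq\overline{Y_Z}$.

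Another option is to show that the map $y\mapsto(Z_y,F_y|_{Z_y})$ on $Y$ has fibres of dimension $\dim W$, where $F_y$ is the sextic corresponding to $y$. These fibres lie in the cosets $y+H^0(\cI_{Z_y}(6))$, so they would then be dense in them.

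Neither statement follows from the pointwise hypothesis alone. Deferring to ``the proof of \cite[Lemma~2.11]{CR23} verbatim'' leaves exactly this step unproved, so as written the proposal restates the lemma rather than proving it.
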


In other words, if a sextic hypersurface through $q$ parametrized by $T$ belongs to $h(\wt{\sC})$, then the same holds after adding any sextic polynomial vanishing on $Z$.

\begin{lemma}[{cf. \cite[Lemma~3.9]{Yeong25}}]
\label{lem:contain-in-osc-locus}
For a general point $p\in \tC$, one has $\ell_p\cap X = 6q$ as schemes.
\end{lemma}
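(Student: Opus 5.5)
The plan is to show that the restriction of the defining sextic $F$ of $X$ to the line $\ell_p$ is a constant multiple of $x^6$, where $x$ is a linear coordinate on $\ell_p$ vanishing at $q$. We would argue by contradiction, combining \Cref{lem:ideal-in-relative-tangent}, the diagram \eqref{eq:comm-diag} and \Cref{lem:fiber-of-restriction}.

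First, by \Cref{lem:ideal-in-relative-tangent}, the image of $T_{\wt{\sC}/\bP^4}|_{(t,p)}$ in $H^0(\bP^4,\cI_q(6))$ contains $H^0(\bP^4,\cI_{\ell_p}(6))$. Applying \Cref{lem:fiber-of-restriction} with $Z=\ell_p$ gives
\[
(t,q)+H^0\bigl(\bP^4,\cI_{\ell_p}(6)\bigr)\subseteq h(\wt{\sC}).
\]
So every sextic $F'=F+G$ with $G|_{\ell_p}=0$ lies in $T$, and $q$ lies on its curve. In other words, the membership of $(F',q)$ in $h(\wt{\sC})$ depends only on $F'|_{\ell_p}\in H^0(\ell_p,\cI_q(6))$. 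This space has dimension $6$, and it is the target of the map $\beta$ in \eqref{eq:comm-diag}.

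Second, we would compute the rank of $\beta$ by a dimension count. The fibers of $h(\wt{\sC})\to\bP^4$ have codimension $2$ in $V_6\times\{q\}$ restricted to sextics through $q$, since the curve has codimension $2$ in $X$. On the other hand, the family of lines $\ell_p$, as $p$ and $t$ vary, is swept by the $G$-action. Fix $q$. The pairs $(F,\ell)$ arising this way, with $\ell$ determined by $(F,q)$, form a family. After quotienting by $H^0(\cI_\ell(6))$, it becomes a subvariety of the total space over the $3$-dimensional family of lines through $q$ of the bundle with fiber $H^0(\ell,\cI_q(6))\cong\bC^6$. Codimension $2$ upstairs, together with the $3$ degrees of freedom in choosing $\ell$, forces the image in $\bC^6$ for a fixed line to have dimension at most $1$. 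That is, $F|_{\ell_p}$ ranges over a cone of dimension $\leq 1$ in $H^0(\ell_p,\cI_q(6))$.

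Third, we would identify this cone. The set of admissible restrictions is invariant under the stabilizer of $(q,\ell_p)$ in $G$, by $G$-invariance of $T$. This stabilizer acts on $H^0(\ell_p,\cI_q(6))=\langle x y^5,\dots,x^6\rangle$ through the Borel subgroup of $\GL_2$ fixing $q=\{x=0\}$, and it also rescales. A $\le1$-dimensional irreducible cone that is invariant under this Borel must be a fixed line of the unipotent part $y\mapsto y+cx$. The only such line is $\bC\cdot x^6$. Hence $F|_{\ell_p}=\lambda x^6$. Here $\lambda\neq0$ because $\ell_p\not\subseteq X$ (a very general sextic threefold contains no lines, by \Cref{lem:no-low-degree-plane-curve}'s count). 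This gives $\ell_p\cap X=6q$ as schemes.

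The main obstacle is the second step: making the dimension count rigorous. One must check that the line $\ell_p$ is determined by $(F,q)$ up to finitely many choices, and that the $3$-dimensional freedom of lines through $q$ is genuinely used, rather than the lines being constrained further. If that count proves delicate, we would instead follow \cite[Lemma~3.9]{Yeong25} directly. There one analyzes $\beta$ and uses \Cref{lem:fiber-of-restriction} to show that the image of $\beta$ has corank $\geq 5$ in $H^0(\ell,\cI_q(6))$, since $N_{f/X}$ has rank $2$ and $\cN$ has rank $1$. One then concludes with the Borel-invariance argument above.
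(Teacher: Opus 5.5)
Your argument works and takes a different route from the paper, and the step you flagged is where the extra work lies.

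\textbf{The paper's argument (infinitesimal, at one point).} Since $H^0(\bP^4,\cI_\ell(6))$ lies in the codimension-$2$ space $T_{\wt{\sC}/\bP^4}|_{(t,p)}$, the image of $\beta$ in the $6$-dimensional space $H^0(\ell,\cI_q(6))$ has dimension exactly $4$. This image contains $K=\langle s,\mathfrak h\cdot s\rangle$, coming from the stabilizer of $(\ell,q)$. It also contains the $3$-dimensional image of $\theta_F$, coming from first-order deformations of $\ell$ through $q$. The two are independent because \Cref{lem:fiber-of-restriction} lets one replace $F$ by a general element of $F+H^0(\bP^4,\cI_\ell(6))$. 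This makes the normal derivatives of $F$ along $\ell$ general without changing $s=F|_\ell$. Hence $4\geq\dim K+3$, so $s$ spans an $\mathfrak h$-stable line, and the endgame is your Step 3.

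\textbf{Your argument (global count).} The sextics through $q$ whose curve passes through $q$ form a $207$-dimensional locus, of codimension $2$ in $H^0(\bP^4,\cI_q(6))$. This locus maps onto the $\bP^3$ of lines through $q$ by transitivity of $\Stab_G(q)$. If each fibre is a union of cosets of the $203$-dimensional space $H^0(\bP^4,\cI_\ell(6))$, the restrictions to a fixed $\ell$ form a cone of dimension $207-3-203=1$. This replaces the injectivity of $\theta_F$ by transitivity and gives a clean picture: a Borel-stable $1$-dimensional cone. The cost is a global fact that the pointwise argument never needs, since there $s$ does not change as $F$ moves in its coset.

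\textbf{The step that needs an argument.} That global fact is more than your Step~1 justifies. \Cref{lem:fiber-of-restriction} only says that $F+P$ still has $q$ on its curve for $P\in H^0(\bP^4,\cI_\ell(6))$. It does not say that the line attached to $(F+P,q)$ is still $\ell$. Without this, the fibres of $F\mapsto(\ell_F,F|_{\ell_F})$ could have dimension below $203$, and your count would only give a lower bound on the cone.

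The missing observation is as follows. For distinct lines $\ell_1,\ell_2$ through $q$,
\[
H^0(\bP^4,\cI_{\ell_1}(6))+H^0(\bP^4,\cI_{\ell_2}(6))=H^0(\bP^4,\cI_q(6)),
\]
since every linear form vanishing at $q$ is a sum of one vanishing on $\ell_1$ and one vanishing on $\ell_2$, and $\cI_q$ is generated in degree one. So a codimension-$2$ tangent space contains $H^0(\bP^4,\cI_\ell(6))$ for at most one line $\ell$ through $q$.

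Now take $P$ general. Then $F+P$ lies in the dense open set where \Cref{lem:ideal-in-relative-tangent} applies and the locus is smooth of codimension $2$; this open set meets the irreducible affine space $F+H^0(\bP^4,\cI_\ell(6))$ at $F$. The tangent space at $F+P$ contains $H^0(\bP^4,\cI_{\ell_{F+P}}(6))$ by that lemma. It also contains $H^0(\bP^4,\cI_\ell(6))$, because the affine space $F+H^0(\bP^4,\cI_\ell(6))$ passes through $F+P$ and lies in the locus. Hence $\ell_{F+P}=\ell$. This gives both uniqueness of $\ell$ given $(F,q)$ and the coset structure of the fibres, after which your count and Step~3 go through.

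\textbf{Your fallback is misstated.} $\im\beta$ has corank $2$ in $H^0(\ell,\cI_q(6))$, not corank $\geq5$. Codimension $5$ belongs to your global restriction cone, not to $\im\beta$.
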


\begin{proof}
Let $F$ be the sextic polynomial defining $X=\sX_t$, and set
$\ell\coloneqq\ell_p$. Consider the restriction map
\[
\beta\ \colon\ 
T_{\wt{\sC}/\bP^4}|_{(t,p)}
\ \longrightarrow\ 
H^0(\bP^4,\cI_q(6))
\ \longrightarrow\ 
H^0\bigl(\ell,\cI_q(6)\bigr).
\] defined in \Cref{eq:comm-diag}. By \Cref{lem:ideal-in-relative-tangent}, one has
\[
H^0(\bP^4,\cI_\ell(6))
\ \subseteq\ 
T_{\wt{\sC}/\bP^4}|_{(t,p)}.
\]
Since $T_{\wt{\sC}/\bP^4}|_{(t,p)}$ has codimension $2$ in
$H^0(\bP^4,\cI_q(6))$, it follows that
\[
\dim\im\beta
\ =\ 
h^0\bigl(\ell,\cO_\ell(6)(-q)\bigr)-2
\ =\ 
4.
\]

Let $G\coloneqq\GL(5)$, as in \Cref{sec:deformation}, and let
$H\coloneqq\Stab_G(\ell,q)^\circ$ be the identity component of the
stabilizer of $(\ell,q)$, with Lie algebra
$\mathfrak h\coloneqq\Lie(H)$. The induced action of $H$ on $\ell$
gives an infinitesimal action of $\mathfrak h$ on
$H^0(\ell,\cI_q(6))$. Set $s\coloneqq F|_\ell$ and
\[
K\ \coloneqq\ 
\left\langle s,\mathfrak h\cdot s\right\rangle
 \ \subseteq \ H^0\bigl(\ell,\cI_q(6)\bigr).
\]
For every $\xi\in\mathfrak h$, the corresponding infinitesimal
$G$-action gives a tangent vector in
$T_{\wt{\sC}/\bP^4}|_{(t,p)}$, since $\xi$ fixes $q$, and its image
under $\beta$ is
\[
(\xi\cdot F)|_\ell \ =\ \xi\cdot s.
\]
Similarly, the tangent vector induced by scaling $F$ maps to $s$.
Hence $K\subseteq\im\beta$.

The first order deformation space of $\ell$ in $\bP^4$ fixing
$q$ is $H^0\bigl(\ell,N_{\ell/\bP^4}(-q)\bigr)$. By
$G$-equivariance, differentiating the restriction of $F$ along such
deformations induces a map
\[
\theta_F\colon
H^0\bigl(\ell,N_{\ell/\bP^4}(-q)\bigr) \ \simeq \ H^0\big(\ell,\cO_{\ell}^{\oplus3}\big) \ \longrightarrow\ 
\im\beta/K.
\]
By \Cref{lem:fiber-of-restriction}, we may choose $F$ generally in the affine space $F+H^0(\bP^4,\cI_\ell(6))$, while keeping $F|_\ell$, and hence $K$,
fixed. This allows the first normal derivative of $F$ along $\ell$ to
vary freely, and hence $\theta_F$ is injective for general $F$. Consequently,
\[
4\ =\ \dim\im\beta\ \geq\ \dim K+3,
\]
and hence $\dim K\leq1$.

Since $X$ contains no lines, one has $s\neq0$ and hence $K=\langle s\rangle$. Therefore the line spanned by $s$ is invariant under $H$, and hence so is the divisor $\operatorname{div}(s)$ on $\ell$. However, since
$\im \big(H\to \Aut(\ell,p)\big)\simeq \mathbf G_a\rtimes\mathbf G_m$ acts transitively on
$\ell\setminus\{q\}$, every finite $G$-invariant divisor on $\ell$ is supported at $q$, and hence
\[\divi(s)\ =\ 6q. \qedhere
\]
\end{proof}

\begin{prop}\label{prop:line-scroll}
If $\cE$ is of rank $2$, then
\[
2g-2
\ \geq\ 
4\deg f^*\cO_{\bP^4}(1).
\]
\end{prop}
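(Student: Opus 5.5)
The plan is to read a degree bound off the ruled surface swept out by the lines $\ell_p$, using the contact condition of \Cref{lem:contain-in-osc-locus}, and then to compare it with the degree of $N_{f/X}$. I have checked the first half. The comparison step is not worked out yet, and it is the main obstacle.

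\emph{Step 1: two degrees.} Put $e\coloneqq\deg\cL$. Since $K_X=\cO_X(1)$ and $T_{\tC}\to f^*T_X$ is injective,
\[
\deg N_{f/X}\ =\ -\deg f^*K_X-\deg T_{\tC}\ =\ 2g-2-e .
\]
The claim is therefore equivalent to $\deg N_{f/X}\geq 3e$, and this is the form I would aim for. Next, let $\pi\colon P\coloneqq\bP(\cE)\to\tC$ be the $\bP^1$-bundle with tautological map $\psi\colon P\to\bP^4$, whose fibre over $p$ maps to $\ell_p$. The quotient $\cE\to\cL$ in \eqref{eq:quotient-bundle} gives a section $\Sigma\subseteq P$ with $\psi(\Sigma)=C$, $\cO_P(1)|_\Sigma\simeq\cL$, and $\Sigma\equiv\cO_P(1)-\pi^*\cN$ numerically. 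The scroll $\psi(P)$ is not contained in $X$, by the same dimension count as in \Cref{lem:no-low-degree-plane-curve} applied to ruled surfaces. By \Cref{lem:contain-in-osc-locus}, the divisor $\psi^*X\in|\cO_P(6)|$ restricts to $6q$ on the general fibre. Being effective, it is then $6\Sigma$ plus fibres, so
\[
6\,\cO_P(1)\ \equiv\ 6\Sigma+\pi^*A,\qquad A\ \text{effective}.
\]
Intersecting with $\Sigma$ and with $\cO_P(1)$, and using $\deg\cE=\deg\cN+e$, gives $\deg\cN\leq0$. Combined with the global generation of $\cE$, this pins down the splitting type of the scroll. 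By itself it carries no genus information.

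\emph{Step 2: the genus input (main obstacle).} The genus has to come from $N_{f/X}$ itself. Since $\cK_p$ does not depend on $F_5$ \cite[Lemma~2.2(i)]{Cle03}, the line bundle $\cN=\cM/\cK$ is independent of $F_5$. The surjection $\alpha$ then factors as
\[
\cN\otimes V_5\ \longrightarrow\ N_{f/X}.
\]
By \eqref{eq:comm-diag}, the map $F_5\mapsto\alpha_{F_5}$ only depends on $F_5$ modulo $H^0(\bP^4,\cI_{\ell_p}(5))$, that is, on $F_5|_{\ell_p}$. I would filter $H^0(\ell_p,\cO(5))$ by order of vanishing at $q$. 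Then:
\begin{enumerate}
\item[(i)] Restrict to quintics vanishing to order $k$ at $q$ along $\ell_p$, and choose the smallest $k$ for which the image in $N_{f/X}$ is still of rank one.
\item[(ii)] The next filtration step should produce an injection of the form $\cN\otimes\cL^{a}\otimes(\text{twist})\hookrightarrow N_{f/X}/\cN'$, where $\cN'$ is the saturation of the rank-one image. The twist should come from $\cO_P(1)$ and $\Sigma$, via Step 1.
\item[(iii)] Summing the two degrees should give $\deg N_{f/X}\geq 3e$.
\end{enumerate}
To make (ii) precise I would first try to locate where the $6$-fold contact at $q$ appears in the infinitesimal action of $\mathfrak h$, as in the proof of \Cref{lem:contain-in-osc-locus}. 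If that does not work, I would fall back on Pacienza's treatment of the osculating locus \cite[Section~4.1]{Pac03} and bound the genus of curves lying in that locus directly.
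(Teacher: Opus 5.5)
\textbf{Verdict: genuine gap.} Your Step~2 is where all the genus information must enter, and it is not yet an argument. Items (ii) and (iii) only assert that some step of the order-of-vanishing filtration ``should'' give an injection $\cN\otimes\cL^{a}\otimes(\text{twist})\hookrightarrow N_{f/X}/\cN'$, with $a$ and the twist unspecified, and that the degrees ``should'' add up to $3e$. Nothing shows that this filtration produces a subsheaf of $N_{f/X}/\cN'$ of controlled degree. So the inequality $\deg N_{f/X}\geq 3e$, to which you correctly reduced the statement, remains unproved, and nothing in the proposal produces the coefficient $3$.

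The missing idea is the one you list only as a fallback: $\tC$ is a connected component of a fixed smooth curve attached to $X$, whose canonical class is computed by adjunction.
\begin{enumerate}
\item[(a)] On the $7$-dimensional flag variety $\{(x,\ell)\colon x\in\ell\}\subseteq\bP^4\times\bG(1,4)$, take the rank-$6$ bundle with fibre $H^0(\ell,\cO_\ell(6)\otimes\cO_\ell/\mathfrak m_x^6)$. It is globally generated by $V_6$.
\item[(b)] The equation of $X$ gives a section of this bundle whose zero locus is the osculating incidence curve $\widetilde{\Delta}_X=\{(x,\ell)\colon \ell\cap X=6x\}$. For very general $X$ this is a smooth curve.
\item[(c)] Adjunction gives $K_{\widetilde{\Delta}_X}=(4H+11L)|_{\widetilde{\Delta}_X}$, where $H$ and $L$ are the pullbacks of $\cO_{\bP^4}(1)$ and of the Pl\"ucker bundle; this is \cite[Section~4.1]{Pac03}.
\end{enumerate}

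Your Step~1 almost supplies the remaining input. Since $X$ contains no lines, $\psi^*X$ contains no fibre, so $A=0$ and $\psi^*X=6\Sigma$, i.e.\ $\ell_p\cap X=6f(p)$ for every $p$. Hence $p\mapsto(f(p),\ell_p)$ maps $\tC$ birationally onto an irreducible component of $\widetilde{\Delta}_X$. Because $\widetilde{\Delta}_X$ is smooth, that component is a connected component isomorphic to $\tC$. Therefore $2g-2=4e+11\deg\phi^*\cO_{\bG(1,4)}(1)\geq 4e$, since $\phi^*\cO_{\bG(1,4)}(1)=\det\cE$ is globally generated. This is the paper's proof.

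There is also a sign slip in Step~1. From $\Sigma\equiv\cO_P(1)-\pi^*\cN$ and $6\cO_P(1)\equiv 6\Sigma+\pi^*A$ you get $\deg A=6\deg\cN$, hence $\deg\cN\geq 0$, not $\leq 0$. With $A=0$ one gets $\deg\cN=0$, so $\deg\det\cE=e$, and in fact $2g-2=15e$ in this case.
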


\begin{proof}
Let $\Lambda_X\subseteq X$ denote the locus swept out by the osculating
lines of $X$. Then by \Cref{lem:contain-in-osc-locus}, one has
$f(\tC)\subseteq\Lambda_X$. Since both are curves, $f(\tC)$ is an irreducible component of $\Lambda_X$. 

Consider the incidence curve
\[
\widetilde{\Delta}_X
\coloneqq
\big\{
(x,\ell)\in X\times\bG(1,4) \ \bigm| \ 
x\in\ell
\text{ and }
\ell\cap X=6x
\big\}.
\]
By \cite[Section~4.1]{Pac03}, the natural morphism
$\widetilde{\Delta}_X\to\Lambda_X$ is a desingularization, and
\[
K_{\widetilde{\Delta}_X}
\ =\ 
(4H+11L)|_{\widetilde{\Delta}_X},
\]
where $H$ and $L$ denote the pullbacks of
$\cO_{\bP^4}(1)$ and $\cO_{\bG(1,4)}(1)$, respectively. It is worth noting that although \cite{Pac03} assumes that $n\geq 6$, the construction of \cite[Section~4.1]{Pac03} applies whenever $d\leq2(n-1)$, while the natural projection to the osculating locus is generically injective whenever $d>n$. Since $n=4$ and $d=6$, both conditions hold.

The map $p\mapsto(f(p),\ell_p)$ identifies $\tC$ with the connected component of $\widetilde{\Delta}_X$ lying over $f(\tC)$. Therefore,
\[
2g-2
\ =\ 
\deg K_{\tC}
\ =\ 
4\deg f^*\cO_{\bP^4}(1)
+
11\deg\phi^*\cO_{\bG(1,4)}(1)
\ \geq\ 
4\deg f^*\cO_{\bP^4}(1).\qedhere\]
\end{proof}

\subsection{Rank two quotient and moving planes}

Let $\cN$ be a rank two quotient bundle of $\cM$. Then \Cref{eq:quotient-bundle} gives rise to a morphism \[\phi:\tC \ \longrightarrow \ \bG\coloneqq \Gr(V,3), \]
such that $f(p)\in \Lambda_p$ for any $p \in \tC$, where $\Lambda_p$ is the plane in $\bP^4$ represented by $\phi(p)$. Let
\[0\ \longrightarrow \ \cS \ \longrightarrow \ V\otimes \cO_{\bG} \ \longrightarrow \ \cQ \ \longrightarrow \ 0\] be the tautological sequence on $\bG$, and $\cO_{\bG}(1)=\det \cQ$ be the Pl\"{u}cker line bundle.

\begin{lemma}\label{lem:pullback-degree-Plucker}
One has $\deg\phi^*\cO_{\bG}(1)\leq 2g-2$.
\end{lemma}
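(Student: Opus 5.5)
Since $\cO_{\bG}(1)=\det\cQ$ and $\phi$ is defined by the rank three quotient $V\otimes\cO_{\tC}\twoheadrightarrow\cE$ in \Cref{eq:quotient-bundle}, one has $\phi^*\cQ\simeq\cE$, and hence $\deg\phi^*\cO_{\bG}(1)=\deg\cE$. The plan is to compute $\deg\cE$ from the bottom row of \Cref{eq:quotient-bundle} and then compare $\cN$ with the normal sheaf $N_{f/X}$, whose degree we know by adjunction.

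\smallskip

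\textbf{Step 1: splitting the degree.} The exact sequence $0\to\cN\to\cE\to\cL\to0$ gives
\[
\deg\cE\ =\ \deg\cN+\deg\cL .
\]

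\textbf{Step 2: comparing $\cN$ with $N_{f/X}$.} By construction, $\cN$ is the image of $\alpha_{F_5}\colon\cM\to N_{f/X}$, so it is a subsheaf of $N_{f/X}$. In the present case $\cN$ has rank $2$, which equals the rank of $N_{f/X}$. On the curve $\tC$ the sheaf $\cN$ is torsion-free, hence locally free, and the quotient $N_{f/X}/\cN$ is a torsion sheaf, so it has nonnegative length. The torsion of $N_{f/X}$, which can occur at ramification points of $f$, only increases $\deg N_{f/X}$. Therefore
\[
\deg\cN\ \leq\ \deg N_{f/X}.
\]

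\textbf{Step 3: adjunction.} The defining sequence $0\to T_{\tC}\to f^*T_X\to N_{f/X}\to0$ together with $K_X=\cO_X(1)$, valid since $d=6$ and $n=4$, gives
\[
\deg N_{f/X}\ =\ -\deg f^*K_X+(2g-2)\ =\ 2g-2-\deg\cL .
\]

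\textbf{Step 4: conclusion.} Combining the three steps,
\[
\deg\phi^*\cO_{\bG}(1)\ =\ \deg\cN+\deg\cL\ \leq\ (2g-2-\deg\cL)+\deg\cL\ =\ 2g-2 .
\]

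\smallskip

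The only point requiring care is Step 2, namely that $\deg\cN\leq\deg N_{f/X}$ even though $N_{f/X}$ may fail to be locally free. I would handle it as follows. Write $N_{f/X}^{\mathrm{tf}}$ for the quotient of $N_{f/X}$ by its torsion. The map $\cN\hookrightarrow N_{f/X}$ composed with this quotient is still injective, because $\cN$ is torsion-free and has the same generic rank. Hence
\[
\deg\cN\ \leq\ \deg N_{f/X}^{\mathrm{tf}}\ \leq\ \deg N_{f/X}.
\]
Everything else is bookkeeping with first Chern classes.
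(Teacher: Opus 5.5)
Your proof is correct and follows the paper's argument: you write $\deg\phi^*\cO_{\bG}(1)=\deg\cL+\deg\cN$, then use $\deg\cN\leq\deg N_{f/X}=2g-2-\deg\cL$, the last equality from $K_X=\cO_X(1)$. Your extra step of passing to the torsion-free quotient of $N_{f/X}$ only makes explicit a point the paper leaves implicit.
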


\begin{proof}
By construction, $\phi^*\cO_{\bG}(1)\cong\det\cE$, and hence
\[
\deg\phi^*\cO_{\bG}(1)\ =\ \deg\cL+\deg\cN.
\]
Since $\cN$ is a subsheaf of $N_{f/X}$ of the same rank, one has
\[
\deg\cN\ \leq\ \deg N_{f/X}
\ =\ \deg f^*T_X-\deg T_{\tC}
\ =\ -\deg f^*\cO_X(1)-(2-2g).
\]
Since $\cL=f^*\cO_X(1)$, the desired inequality follows.
\end{proof}

\begin{prop}\label{prop:constant-plane}
If $\phi$ is constant, then $2g-2\geq \deg f^*\cO_{\bP^4}(1)$.
\end{prop}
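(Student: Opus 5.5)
If $\phi$ is constant, every $f(p)$ lies in a single plane $\Lambda$, so $f(\tC)\subseteq X\cap\Lambda$. By \Cref{lem:no-low-degree-plane-curve}, $X\cap\Lambda$ is an integral plane sextic. Since $f(\tC)$ is an integral curve inside it, we get $f(\tC)=X\cap\Lambda$, and $f$ is its normalization. Hence $\deg f^*\cO_{\bP^4}(1)=6$. By \Cref{prop:normalization-of-plane-curve}, the geometric genus satisfies $g\geq 4$, so $2g-2\geq 6=\deg f^*\cO_{\bP^4}(1)$, which is the claim.

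I will also check that the constant case is compatible with the construction in \Cref{eq:quotient-bundle}. There $\cE$ is the trivial quotient $V/K_\Lambda\otimes\cO_{\tC}$ with $K_\Lambda=H^0(\bP^4,\cI_\Lambda(1))$. This forces $\deg\cE=0$, so $\deg\cN=-\deg\cL$. That is consistent with \Cref{lem:pullback-degree-Plucker}, but it is not needed, since the argument above only uses that $f(p)\in\Lambda$ for all $p$.

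I do not expect a serious obstacle. The one point needing care is applying \Cref{prop:normalization-of-plane-curve}. This is legitimate because $X$ is the fiber over a very general point $t\in T$, and $T\to V_6$ is étale, hence dominant. So $X$ is a very general sextic threefold, and both \Cref{lem:no-low-degree-plane-curve} and \Cref{prop:normalization-of-plane-curve} apply to it.
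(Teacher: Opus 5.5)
Your proposal is correct and follows the paper's proof essentially verbatim: integrality of the plane section from \Cref{lem:no-low-degree-plane-curve} forces $C=X\cap\Lambda$, so $\deg f^*\cO_{\bP^4}(1)=6$, and \Cref{prop:normalization-of-plane-curve} gives $g\geq 4$, hence $2g-2\geq 6$. Your consistency check with \Cref{lem:pullback-degree-Plucker} is harmless but, as you note, not needed.
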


\begin{proof}
Let $\Lambda\subseteq\bP^4$ be the plane corresponding to the image of $\phi$. Then $C\subseteq X\cap\Lambda$. By \Cref{lem:no-low-degree-plane-curve}, the plane section $X\cap\Lambda$ is integral, and hence $C=X\cap\Lambda$. It follows from \Cref{prop:normalization-of-plane-curve} that $g=g(\wt{C})\geq 4$. Therefore, $2g-2\geq 6=\deg f^*\cO_{\bP^4}(1)$.
\end{proof}

We may now assume that $\phi$ is finite onto its image. We conclude this section by treating a special case; the general case will be addressed in the next section, where the moduli of stable maps and the geometry of fibered surfaces enter the argument.

\begin{prop}\label{prop:singular-plane-section}
If $f(p)$ is a singular point of $\Lambda_p\cap X$ for every $p\in \tC$, then
\[
\deg f^*\cO_{\bP^4}(1)\ \leq\  c(2g-2)
\]
for some constant $c$ depending only on $X$.
\end{prop}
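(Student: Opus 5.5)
We plan to interpret the hypothesis geometrically: $f(p)\in\mathrm{Sing}(\Lambda_p\cap X)$ means exactly that $\Lambda_p\subseteq T_{f(p)}X$, the embedded tangent hyperplane. Thus $\Lambda_p$ is a plane in the $\bP^3=T_qX$ through $q=f(p)$, and $X\cap\Lambda_p$ is an integral plane sextic (\Cref{lem:no-low-degree-plane-curve}) singular at $q$. The strategy mirrors \Cref{prop:line-scroll}: show that the hypothesis, combined with the deformation input of \Cref{lem:fiber-of-restriction}, forces $f(\tC)$ into a closed locus $\Lambda'_X\subseteq X$ defined by a \emph{stronger} degeneracy condition on plane sections, and that this locus is a curve (or finite union of curves) whose components are determined by $X$. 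Since $X$ is fixed, there are finitely many such components, and for each one the ratio $\deg f^*\cO_{\bP^4}(1)/(2g-2)$ is a fixed number. We set $c$ to be the maximum over the components; components of geometric genus $\le 1$ must be ruled out separately.

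First we replicate the argument of \Cref{lem:ideal-in-relative-tangent} for planes. It gives $H^0(\bP^4,\cI_{\Lambda_p}(6))\subseteq T_{\wt{\sC}/\bP^4}|_{(t,p)}$, so the restriction map $\beta\colon T_{\wt{\sC}/\bP^4}|_{(t,p)}\to H^0(\Lambda_p,\cI_q(6))$ has image of codimension $2$, namely of dimension $27-1-2=24$ (the ambient space has dimension $28-1$). Next we let $H=\Stab_G(\Lambda_p,q)^\circ$ and $s=F|_{\Lambda_p}$. As in \Cref{lem:contain-in-osc-locus}, we use \Cref{lem:fiber-of-restriction} to vary $F$ in $F+H^0(\cI_{\Lambda_p}(6))$, so that the first normal derivative is free. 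The image of $\beta$ then contains $\langle s,\mathfrak h s\rangle$ plus an injective image of the deformations of $\Lambda_p$ fixing $q$ and staying in $T_qX$. This yields a dimension bound on $\langle s,\mathfrak h\cdot s\rangle$, which means $s$ has a large stabilizer in $\mathfrak h$ up to scaling. The conclusion should be a projective-invariant degeneracy of the curve $X\cap\Lambda_p$ at $q$, for instance a point of multiplicity $\ge 3$ or a higher-order tacnode/cusp, that imposes at least $4$ conditions beyond ordinary tangency.

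Next we run a dimension count in the style of \Cref{prop:normalization-of-plane-curve}. The incidence of pairs $(q,\Lambda)$ with $q\in\Lambda$ and $X\cap\Lambda$ having that degeneracy at $q$ has expected dimension $\le 1$ for very general $X$, because the pairs $(q,\Lambda)$ form an $8$-dimensional family and the conditions number at least $7$. Hence the locus $\widetilde\Delta'_X$ swept by such pairs is a curve, and $\tC$ maps birationally onto a component of it via $p\mapsto (f(p),\Lambda_p)$, exactly as in \Cref{prop:line-scroll}. A very general $X$ has only finitely many such components, and each has a fixed degree and genus. We will verify that the genus is $\ge 2$, either by an adjunction computation on the incidence variety analogous to Pacienza's $K=(4H+11L)$, or by invoking the known rationality/elliptic results (cf. \cite{Voi96}) for very general sextic threefolds. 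With that, $c:=\max \deg/(2g-2)$ over components is a constant depending only on $X$.

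The main obstacle is the middle step: making the Lie-algebra argument produce a degeneracy of codimension large enough for the count. The stabilizer $H$ of a flag (point, plane) in $\bP^4$ acts on $\Lambda\simeq\bP^2$ through the affine group fixing $q$, which is no longer transitive enough to force $\operatorname{div}(s)$ to be supported at $q$. We would first try to show that $\dim\langle s,\mathfrak h s\rangle$ is small enough that $s$ is semi-invariant under a positive-dimensional torus or unipotent subgroup fixing $q$. That would force the tangent cone at $q$ to be highly degenerate. If this fails, the fallback is the direct counting above: use $\phi$-finiteness together with \Cref{lem:pullback-degree-Plucker} and the finiteness of the closed locus of $q$ admitting such planes.
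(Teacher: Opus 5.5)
\textbf{There is a genuine gap, at exactly the step you flag as the main obstacle.}

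In the plane case the tangent-space argument carries essentially no information.
\begin{itemize}
\item Since $H^0(\bP^4,\cI_{\Lambda_p}(6))\subseteq T_{\wt{\sC}/\bP^4}|_{(t,p)}$, and this space has codimension $2$ in $H^0(\bP^4,\cI_q(6))$, the image of $\beta$ has codimension $2$ in the $27$-dimensional space $H^0(\Lambda_p,\cI_q(6))$. So its dimension is $25$, not $24$.
\item The space $\langle s,\mathfrak h\cdot s\rangle$ has dimension at most $7$. Indeed, $\Stab_G(\Lambda_p,q)$ acts on $\Lambda_p$ through the $7$-dimensional group of linear automorphisms of the cone $\bC^3$ preserving the line of $q$.
\item The deformations of $\Lambda_p$ fixing $q$ contribute at most $4$ further dimensions.
\end{itemize}
The inequality $\dim\langle s,\mathfrak h\cdot s\rangle+4\leq 25$ therefore holds automatically. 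No degeneracy of $X\cap\Lambda_p$ at $q$ beyond your hypothesis can be extracted this way; the line case worked only because $\dim\im\beta=6-2=4$ is tiny.

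Consequently you have no grounds for placing $f(\tC)$ in one of finitely many curves determined by $X$. The codimension-$7$ count and the adjunction or genus computation then have nothing to apply to. Two further problems:
\begin{itemize}
\item The appeal to \cite{Voi96} to exclude rational or elliptic components is not available. Those results need $d\geq 2n-1=7$ when $n=4$, and excluding such curves is part of what is being proved.
\item The fallback is too vague to count as an argument, and its ingredient ``finiteness of the closed locus of $q$ admitting such planes'' is false under the actual hypothesis. Every $q\in X$ lies on a $2$-dimensional family of planes $\Lambda$ with $q\in\Lambda\subseteq T_qX$.
\end{itemize}

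\textbf{The missing idea.} No deformation theory is needed here, and finiteness should be sought on the side of planes rather than points. Set
\[
\Sigma^{\sing}_X\ =\ \{(x,\Lambda)\in X\times\bG : x\in\Lambda\subseteq T_xX\}.
\]
The fiber of $q_2\colon\Sigma^{\sing}_X\to\bG$ over $[\Lambda]$ is the singular locus of $X\cap\Lambda$. This is finite because every plane section is integral (\Cref{lem:no-low-degree-plane-curve}). So $q_2$ is finite and $q_2^*\cO_{\bG}(1)$ is ample on $\Sigma^{\sing}_X$. Hence there is $c\gg0$, depending only on $X$, such that $q_2^*\cO_{\bG}(c)\otimes q_1^*\cO_X(-1)$ is nef.

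Your (correct) reformulation of the hypothesis says exactly that $(f,\phi)\colon\tC\to X\times\bG$ factors through $\Sigma^{\sing}_X$. Pulling back the nef bundle and applying \Cref{lem:pullback-degree-Plucker} gives
\[
\deg f^*\cO_{\bP^4}(1)\ \leq\ c\deg\phi^*\cO_{\bG}(1)\ \leq\ c(2g-2).
\]
Since the left side is positive, this also rules out $g\leq 1$ automatically, with no separate genus argument.
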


\begin{proof}
    Consider the incidence correspondence \[
 \Sigma^{\sing}_X\ \coloneqq
 \bigl\{(x,\Lambda)\in X\times \bG:
 x\in\Lambda,\ \Lambda\subset T_xX\bigr\},\]
and let $q_1:\Sigma^{\sing}_X\to X$ and $q_2:\Sigma^{\sing}_X\to \bG$ be the two projections.

\begin{lemma}
The morphism $q_2:\Sigma^{\sing}_X\to \bG$ is finite.
\end{lemma}

\begin{proof}
For any plane $\Lambda\subseteq \bP^4$, the fiber $q_2^{-1}([\Lambda])$ is the singular locus of the integral plane curve $X\cap\Lambda$. Thus $q_2$ is quasi-finite, and hence finite by the properness.
\end{proof}

As a consequence, $q_2^*\cO_{\bG}(1)$ is ample on $\Sigma^{\sing}_X$. Therefore one can choose a positive integer $c\gg0$ such that 
\[
 q_2^*\cO_\bG(c)\otimes q_1^*\cO_X(-1)
\]
is nef. Since $f(p)$ is a singularity of $\Lambda_p\cap X$, the morphism $(f,\phi):\tC\to X\times \bG$ factors through $\Sigma^{\sing}_X$. Pulling back the nef
line bundle to $\tC$, by \Cref{lem:pullback-degree-Plucker}, gives
\[
 c\cdot (2g-2)-\deg f^*\cO_{\bP^4}(1)\geq0. \qedhere
\]
\end{proof}

\bigskip

\section{Generic plane scroll}\label{sec:moving-plane-case}

In this section, we establish a uniform positive lower bound for the ratio
$\deg\omega_{\tC}/\deg f^*\cO_{\bP^4}(1)$
under the assumptions that $\phi\colon\tC\to\bG$ is nonconstant and that
$f(p)$ is a smooth point of $\Lambda_p\cap X$ for general $p\in\tC$.

\medskip

Let $\phi\colon\tC\to\bG$ be the morphism induced by the quotient
$V\otimes\cO_{\tC}\twoheadrightarrow\cE$ in
\Cref{eq:quotient-bundle}, and retain the notation introduced there.
Then $\bP_{\bG}(\cQ)\to\bG$ is the universal family of planes in $\bP^4$, while $\cC_{\bG}\to\bG$ is the universal family of plane
sections of $X$. These families fit into the following commutative
diagram:
\begin{equation}
\begin{tikzcd}[ampersand replacement=\&]
	{\cC_{\tC}} \&\& {\bP_{\tC}(\cE)} \&\& \tC \\
	{\cC_{\bG}} \&\& {\bP_{\bG}(\cQ)} \&\& \bG \\
	{\bG\times X} \&\& {\bG\times \bP^4} \&\& \bG
	\arrow[from=1-1, to=1-3]
	\arrow[from=1-1, to=2-1]
	\arrow[from=1-3, to=1-5]
	\arrow[from=1-3, to=2-3]
	\arrow["\phi"', from=1-5, to=2-5]
	\arrow[hook, from=2-1, to=2-3]
	\arrow[hook, from=2-1, to=3-1]
	\arrow[from=2-3, to=2-5]
	\arrow[hook, from=2-3, to=3-3]
	\arrow["{=}"{marking, allow upside down}, draw=none,
		from=2-5, to=3-5]
	\arrow[hook, from=3-1, to=3-3]
	\arrow[from=3-3, to=3-5]
	\arrow["\lrcorner"{description, very near start}, draw=none,
		from=2-3, to=1-1]
	\arrow["\lrcorner"{description, very near start}, draw=none,
		from=2-5, to=1-3]
\end{tikzcd}.
\end{equation}

Choose a finite stratification $\bG=\coprod_i\bG_i$ by smooth irreducible
locally closed subsets such that, for each $i$, the restricted universal
family
$\sC_i\coloneqq\sC_{\bG}\times_{\bG}\bG_i\to\bG_i$
admits a simultaneous normalization
\[
\wt{\sC}_i\longrightarrow\sC_i\longrightarrow\bG_i.
\]
By \Cref{lem:no-low-degree-plane-curve}, the fibers of
$\sC_{\bG}\to\bG$ are integral. Consequently,
$\wt{\sC}_i\to\bG_i$ is a smooth proper family of connected curves.

Let $\bG_i$ be the stratum containing the image of the generic point
$\eta_{\tC}$ under $\phi$, and let $g_i$ denote the genus of the fibers of
$\wt{\sC}_i\to\bG_i$. By
\Cref{prop:normalization-of-plane-curve}, one has $g_i\geq4$. The family of maps

\[\begin{tikzcd}[ampersand replacement=\&]
	{\wt{\sC}_i} \& {\sC_i} \& X \\
	{\bG_i}
	\arrow[from=1-1, to=1-2]
	\arrow[from=1-1, to=2-1]
	\arrow[from=1-2, to=1-3]
\end{tikzcd}\]
induces a morphism
$\bG_i\to\ove{\MM}_{g_i}(X,\beta)$
to the moduli stack of Kontsevich stable maps of genus $g_i$ to $X$ and
class $\beta\in H_2(X,\bZ)$.

Let $\ove{\bG}_i$ be the closure of $\bG_i$ in $\bG$. Since the generic
point of $\tC$ is mapped to $\bG_i$, the morphism
$\phi\colon\tC\to\bG$ factors through $\ove{\bG}_i$. Let $\cG_i$ be the
normalization of the closure of the graph of the rational map
\[
\ove{\bG}_i\ \dashrightarrow\ \ove{\MM}_{g_i}(X,\beta).
\]
Then $\cG_i$ is a proper Deligne--Mumford stack equipped with a proper birational morphism to $\ove{\bG}_i$. Choose a finite surjective morphism $\bH_i\to\cG_i$ from a smooth projective variety $\bH_i$. This choice depends only on the fixed stratum $\bG_i$, and not on $C$. The composition
\[
\pi_i\colon\bH_i\ \longrightarrow\ \ove{\bG}_i
\]
is a generically finite projective morphism. Up to further stratification of $\ove{\bG}_i$ and repeating the above construction, one may assume that $\pi$ is finite \'{e}tale over $\bG_i$. Let $d_i$ be the degree of $\pi$. Let $B_i\subseteq\bH_i$ be the union of the non-\'{e}tale locus of $\pi_i$
and the locus over which the \emph{stabilized universal curve}
(cf. \Cref{rem:stabilized-universal-curve}) is singular, endowed with
its reduced scheme structure. Choose an effective Cartier divisor $E_i$ on $\bH_i$ whose support is $\Exc(\pi_i)$ and a very ample line bundle \[\cL_i\ \coloneqq \ \pi_i^*\cO_{\ove{\bG}_i}(m_i)\otimes \cO_{\bH_i}(-E_i)\] such that $\cI_{B_i}\otimes\cL_i$ is globally generated.

\begin{rem}\label{rem:stabilized-universal-curve}
Here, the \emph{stabilized universal curve} refers to stabilization as a family of curves, rather than as a family of maps. More precisely, for the pullback of universal domain curves
$\sC_{\ove{\bH}_i}\to\ove{\bH}_i$, we forget the maps and take its Deligne--Mumford stabilization.
\end{rem}

Let $C'$ be the normalization of a reduced irreducible component of
$\tC\times_{\ove{\bG}_i}\bH_i$ that dominates $\tC$. Denote the induced
morphisms by
\[
\tau\colon C'\longrightarrow\tC,
\qquad
u\colon C'\longrightarrow\bH_i.
\]
Let $r\coloneqq\deg\tau\leq d_i$, and let $R_{\tau}\subseteq C'$ be the ramification divisor of $\tau$. Let $g'$ be the genus of $C'$. Since $\phi$ is nonconstant, $\deg\phi^*\cO_{\bG}(1)>0$, and thus \Cref{lem:pullback-degree-Plucker} implies \[g'\ \geq\  g\ \geq\ 2.\] 
\begin{lemma}\label{lem:fixed-boundary-control}
One has
\[
\deg u^*\cL_i
\ \leq\ 
m_i r\deg\phi^*\cO_{\bG}(1) \ \leq \ m_ir(2g-2).
\]
\end{lemma}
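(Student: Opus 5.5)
The plan is to compute $\deg u^*\cL_i$ directly from its definition and compare it with the pullback of the Plücker bundle through the commutative square relating $u$, $\tau$, $\pi_i$ and $\phi$. By construction, the composition $\pi_i\circ u\colon C'\to\ove{\bG}_i\subseteq\bG$ equals $\phi\circ\tau$. Indeed, $C'$ is the normalization of a component of $\tC\times_{\ove{\bG}_i}\bH_i$, and the two projections commute over $\ove{\bG}_i$. Therefore
\[
u^*\cL_i\ \simeq\ \tau^*\phi^*\cO_{\bG}(m_i)\otimes u^*\cO_{\bH_i}(-E_i),
\]
and taking degrees gives
\[
\deg u^*\cL_i\ =\ m_i\,r\,\deg\phi^*\cO_{\bG}(1)\ -\ \deg u^*\cO_{\bH_i}(E_i).
\]
Here we use $\deg\tau^*M=r\deg M$ for a line bundle $M$ on $\tC$.

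The key step is to show that $\deg u^*\cO_{\bH_i}(E_i)\geq 0$. Since $E_i$ is an effective Cartier divisor, it suffices to check that $u(C')\not\subseteq\Supp E_i=\Exc(\pi_i)$. Then $u^*E_i$ is a well-defined effective divisor on $C'$, and its degree is nonnegative. To see this, note that the generic point of $\tC$ maps under $\phi$ into the stratum $\bG_i$. We have arranged that $\pi_i$ is finite étale over $\bG_i$, so in particular $\pi_i$ is not exceptional there. Hence the generic point of $C'$ maps into $\pi_i^{-1}(\bG_i)$, which is disjoint from $\Exc(\pi_i)$. This gives the first inequality. The one thing to be careful about is that $\Exc(\pi_i)$ lies over the complement of $\bG_i$; this follows because $\pi_i$ is finite over $\bG_i$, so it contracts nothing lying over $\bG_i$.

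The second inequality follows immediately from \Cref{lem:pullback-degree-Plucker}, which gives $\deg\phi^*\cO_{\bG}(1)\leq 2g-2$, multiplied by $m_ir\geq 0$. I do not expect any real obstacle. The only subtle point is the non-containment $u(C')\not\subseteq E_i$, which is handled by the choice of stratum and the étaleness arrangement above.
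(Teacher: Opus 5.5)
Your proposal is correct and follows essentially the same route as the paper. You use $\pi_i\circ u=\phi\circ\tau$, deduce $u(C')\not\subseteq E_i$ from the fact that the generic point of $C'$ lands over $\bG_i$, where $\pi_i$ is finite étale, so that $\deg u^*\cO_{\bH_i}(E_i)\geq 0$, and then obtain the second inequality from \Cref{lem:pullback-degree-Plucker}.
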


\begin{proof}
Let $\eta_{C'}$ be the generic point of $C'$. Since
$\pi_i\circ u(\eta_{C'})\in\bG_i$ and $\pi_i\colon\bH_i\to\ove{\bG}_i$
is finite étale over $\bG_i$, one has $u(C')\not\subseteq E_i$.
Therefore, by construction,
\[
\deg u^*\cL_i
 \ \leq\ 
m_i\deg(\pi_i\circ u)^*\cO_{\bG}(1)
\ =\ 
m_i\deg(\phi\circ\tau)^*\cO_{\bG}(1)
\ =\ 
m_i r\deg\phi^*\cO_{\bG}(1).
\] The second inequality in the statement follows from \Cref{lem:pullback-degree-Plucker}
\end{proof}

\smallskip

Let $\pi^{\univ}:\sC^{\univ}\to \ove{\MM}_{g_i}(X,\beta)$ be the universal family of prestable curves. Then the pullback to $\bH_i$ and $C'$ fits into the following diagram:
\[\begin{tikzcd}[ampersand replacement=\&]
	S' \&\& {\cC_{i}} \&\& {\sC^{\univ}} \& X \\
	{C'} \&\& {\bH_i} \&\& {\ove{\MM}_{g_i}(X,\beta)}
	\arrow["{\psi_i}", from=1-1, to=1-3]
	\arrow[from=1-1, to=2-1]
	\arrow[from=1-3, to=1-5]
	\arrow["{\nu_i}"{description}, bend left = 18pt, from=1-3, to=1-6]
	\arrow["{q_i}"', from=1-3, to=2-3]
	\arrow[from=1-5, to=1-6]
	\arrow[from=1-5, to=2-5]
	\arrow[from=2-1, to=2-3]
	\arrow[from=2-3, to=2-5]
\end{tikzcd}.\]

\begin{lemma}\label{lem:DM-stable}
Every fiber of $q_i$ is Deligne--Mumford stable.
\end{lemma}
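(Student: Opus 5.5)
Every fiber of $q_i$ is the domain $D$ of a Kontsevich stable map $\mu\colon D\to X$ of genus $g_i$ and class $\beta$ that arises as a limit of the maps $\wt{\sC}_{i,[\Lambda]}\to X\cap\Lambda\subseteq X$ parametrized by $\bG_i$. Since $D$ is connected and prestable, it is Deligne--Mumford stable exactly when it has no rational component meeting the rest of $D$ in at most two points, and no elliptic component with no special points. I will in fact show the stronger statement that \emph{no} component of $D$ has geometric genus at most $1$.

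The plan has three steps.

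\textbf{Step 1: contracted components.} Let $D'\subseteq D$ be an irreducible component, and first suppose $\mu$ contracts $D'$. Stability of the map forces $D'$ to be stable as a component: either $g(D')\geq 2$, or $g(D')=1$ with at least one special point, or $g(D')=0$ with at least three special points. So contracted components are automatically stable, and I only need to treat non-contracted components.

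\textbf{Step 2: non-contracted components lie in plane sections.} Let $D'$ be a component with $\mu(D')$ a curve. The plane $\pi_i(b)=[\Lambda]$ is well defined, because $\bH_i\to\ove{\bG}_i$ exists. The limiting image is contained in the limit of the subschemes $X\cap\Lambda_t$. The family of planes is flat over $\ove{\bG}_i$, so this limit is exactly $X\cap\Lambda$. Hence $\mu(D')\subseteq X\cap\Lambda$.

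By \Cref{lem:no-low-degree-plane-curve}, the curve $X\cap\Lambda$ is integral. Therefore $\mu(D')=X\cap\Lambda$, and $\mu|_{D'}$ factors through the normalization of $X\cap\Lambda$.

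\textbf{Step 3: genus bound.} By \Cref{prop:normalization-of-plane-curve}, the normalization of $X\cap\Lambda$ has genus at least $4$. By Riemann--Hurwitz, $g(D')\geq 4$. In particular, no non-contracted component is rational or elliptic.

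Combining Steps 1--3, every rational component of $D$ is contracted and so carries at least three special points. Every elliptic component is either contracted, and then carries a special point, or is impossible. Hence $D$ is DM stable.

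As a sanity check, the degree of $\beta$ equals $\deg(X\cap\Lambda)=6$. So exactly one component of $D$ maps with degree $1$ onto $X\cap\Lambda$, and all other components are contracted. The same conclusion then holds after base change to $S'\to C'$.

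\textbf{Main obstacle.} The delicate point is Step 2 at points of $\bH_i$ lying over $\ove{\bG}_i\setminus\bG_i$, and especially over the exceptional locus $\Exc(\pi_i)$. There one must make sure that the limit image is really contained in $X\cap\Lambda$ for the image plane. I would argue as follows. The evaluation map from the universal curve over $\bH_i$ to $X\times\ove{\bG}_i$ has image a closed subset. This subset contains the graph of the family over the dense open set, so it is contained in the closed universal plane-section family $\cC_{\ove{\bG}_i}$.
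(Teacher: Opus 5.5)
Your proof is correct and follows essentially the same route as the paper. Any component not contracted by the map dominates the integral plane section $X\cap\Lambda$, whose normalization has genus at least $4$, so every rational component is contracted and then has at least three special points by Kontsevich stability. The differences are cosmetic: you use Riemann--Hurwitz where the paper uses $(\nu_h)_*[C_h]=[X\cap\Lambda_h]$ to get a unique birational component, and you justify the containment $\nu_h(C_h)\subseteq X\cap\Lambda_h$ by a closedness argument (implicitly using flatness of the universal curve over $\bH_i$, so that the part over the dense open set is dense), which the paper asserts ``by construction''.
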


\begin{proof}
Let $h\in \bH_i$ be a point, $C_h$ be the $q_i$-fiber over $h$ with a morphism $\nu_h\colon C_h\to X$, and
let $D_h=X\cap\Lambda_h$. By construction, $\nu_h(C_h)\subseteq D_h$ and
$(\nu_h)_*[C_h]=[D_h]$. Since $D_h$ is integral, there is a unique
component of $C_h$ mapping nonconstantly to $X$, and it maps birationally onto $D_h$. By
\Cref{prop:normalization-of-plane-curve}, its normalization has genus at least $4$. Therefore, every rational component of $C_h$ is contracted by $\nu_h$, and hence by the stability condition, each such component has at least three special points. Thus $C_h$ is Deligne--Mumford stable.
\end{proof}

Let
$\rho\colon S\to S'$ be its minimal resolution. As fibers of $S'\to C'$ have at worst nodal singularities, $S'$ has at worst $A_n$-singularities, and hence $\rho$ is crepant and $S\to C'$ is a relatively minimal semistable fibration of genus $g_i$.

By assumption, $f(p)$ is a smooth point of
$X\cap\Lambda_p=(\sC_{\bG})_{\phi(p)}$ for general $p\in\tC$. It therefore has a unique lift to the normalization of the generic plane section. After the base change to $C'$, this gives a rational section of $\pi':S'\to C'$, which extends to a section
\[\sigma'\colon C'\ \longrightarrow \  S'\] by the properness. Composing with the stabilization and lifting through the proper resolution $\rho$ gives a section $\Sigma\subseteq S$.

\begin{cor}\label{cor:bounds-on-curves}
Let $s$ be the number of singular fibers of $\pi'$. Then the following
inequalities hold:
\begin{enumerate}\setlength{\itemsep}{0.3em}
    \item[\textup{(1)}]
    $(K_{S/C'}\cdot\Sigma)
    \ \leq\ 
    (2g_i-1)(2g'-2+s)$;
    \item[\textup{(2)}]
    $s
    \ \leq\ 
    m_i r\deg\phi^*\cO_{\bG}(1)$;
    \item[\textup{(3)}]
    $\deg R_\tau
    \ \leq\ 
    (d_i-1)m_i r\deg\phi^*\cO_{\bG}(1)$.
\end{enumerate}
\end{cor}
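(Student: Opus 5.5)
(1) will be a direct application of \Cref{lem:log-BMY} to the relatively minimal semistable fibration $S\to C'$ of genus $h=g_i\geq 4$ with section $\Sigma$. We have $g'\geq 2$, as noted before \Cref{lem:fixed-boundary-control}. Since $\rho$ is the minimal resolution of $A_n$-singularities, the singular fibers of $S\to C'$ lie exactly over the points where the fibers of $\pi'$ are singular. Their number is therefore $s$, and \Cref{lem:log-BMY} gives (1) verbatim.

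For (2) and (3) the plan is to count intersections of $u(C')$ with the fixed divisors on $\bH_i$, using \Cref{lem:fixed-boundary-control}.

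\emph{Step for (2).} A fiber of $\pi'$ over $x\in C'$ is the fiber of $q_i$ over $u(x)$, which is DM stable by \Cref{lem:DM-stable}. If it is singular, then $u(x)$ lies in the locus over which the stabilized universal curve is singular, so $u(x)\in B_i$. We first check that $u(C')\not\subseteq B_i$. The generic point maps into $\pi_i^{-1}(\bG_i)$, where $\pi_i$ is étale, and the fibers there are the smooth normalizations $\wt{\sC}_i$ of plane sections. Since $\cI_{B_i}\otimes\cL_i$ is globally generated, we can choose a section of $\cL_i$ vanishing on $B_i$ but not identically on $u(C')$. Its pullback is a nonzero section of $u^*\cL_i$ vanishing at each of the $s$ points. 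Hence $s\leq\deg u^*\cL_i$, and \Cref{lem:fixed-boundary-control} then gives (2).

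\emph{Step for (3).} We have $C'\to\tC$ of degree $r$. At a point $x$ with ramification index $e_x>1$, we will argue that $u(x)$ lies in the non-étale locus of $\pi_i$, hence in $B_i$. Indeed, if $\pi_i$ is étale near $u(x)$, then $\tC\times_{\ove{\bG}_i}\bH_i$ is étale over $\tC$ near the relevant point. The normalization of a component is then unramified there, so $\tau$ is unramified at $x$. The contribution of $x$ to $R_\tau$ is $e_x-1\leq d_i-1$, since $e_x\leq r\leq d_i$. The number of such points is at most $\deg u^*\cL_i$, by the same section argument as in (2). This gives $\deg R_\tau\leq (d_i-1)\deg u^*\cL_i$, and we finish with \Cref{lem:fixed-boundary-control}.

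The main obstacle is the bookkeeping in (2) and (3). Specifically, we must justify that singular fibers of $\pi'$ and ramification points of $\tau$ land set-theoretically in $B_i$, and that the sections used are not identically zero on $u(C')$. The latter rests on $\pi_i\circ u(\eta_{C'})\in\bG_i$, where $\pi_i$ is finite étale and the stabilized family is smooth. We should also note that (3) counts ramification points with multiplicity only up to the factor $d_i-1$, rather than via intersection multiplicities with $B_i$. That is why the crude bound $e_x-1\leq d_i-1$ suffices.
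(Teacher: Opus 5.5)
Your proposal is correct and follows essentially the same argument as the paper. Part (1) is a direct application of \Cref{lem:log-BMY}. For (2) and (3), you pull back a section of $\cI_{B_i}\otimes\cL_i$ that does not vanish at $u(\eta_{C'})$, note that the singular fibers of $\pi'$ and $\Supp R_\tau$ lie in $u^{-1}(B_i)$, bound each ramification contribution by $r-1\leq d_i-1$, and finish with \Cref{lem:fixed-boundary-control}. You also make explicit two points the paper leaves implicit: that $u(\eta_{C'})\notin B_i$, and why \'etaleness of $\pi_i$ at $u(x)$ forces $\tau$ to be unramified at $x$.
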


\begin{proof}
The first inequality follows immediately from \Cref{lem:log-BMY}, so it
suffices to prove \textup{(2)} and \textup{(3)}. By the global generation
of $\cI_{B_i}\otimes\cL_i$, we may choose a section
$s_i\in H^0(\bH_i,\cI_{B_i}\otimes\cL_i)$ that does not vanish at
$u(\eta_{C'})$. Let $D_i$ be its zero divisor. Then
$B_i\subseteq\operatorname{Supp}(D_i)$.

Since every singular fiber of $\pi'$ lies over a point of $u^{-1}(B_i)$, one has
\[
s \ \leq\ 
\deg u^*D_i
\ =\ 
\deg u^*\cL_i
\ \leq\ 
m_i r\deg\phi^*\cO_{\bG}(1),
\]
where the last inequality follows from
\Cref{lem:fixed-boundary-control}. This proves (2). Moreover, since $\tau$ is \'{e}tale away from $u^{-1}(B_i)$, one has 
$\Supp R_\tau \subseteq\Supp(u^*D_i)$, and consequently 
\[
\deg R_\tau
\ \leq\ 
(r-1)\deg u^*D_i
\ \leq\ 
(d_i-1)m_i r\deg\phi^*\cO_{\bG}(1),
\]
which proves \textup{(3)}.
\end{proof}

\begin{lemma}\label{lem:ambient-canonical-comparison}
There exists a constant $b_i>0$, depending only on
$\bH_i$ and $\cL_i$, such that
\[
r\deg f^*\cO_{\bP^4}(1)
\ \leq\ 
3(K_{S/C'}\cdot\Sigma)+b_i\deg u^*\cL_i.
\]
\end{lemma}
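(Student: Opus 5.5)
We want to bound $r\deg f^*\cO_{\bP^4}(1)=\deg(f\circ\tau)^*\cO_{\bP^4}(1)$ by the geometric canonical height of the section $\Sigma$, up to an error controlled by $\deg u^*\cL_i$. The plan is to compare line bundles on the universal objects over $\bH_i$ and then pull back along the section. Let $\sC_i$ be the stabilized universal curve over $\bH_i$, with $\nu_i\colon\sC_i\to X$. The composition $\Sigma\to S\to S'\to\sC_i\to X$ is $f\circ\tau$, because $\sigma'(p')$ is the lift of $f(\tau(p'))$. Hence
\[
r\deg f^*\cO_{\bP^4}(1)=\deg\bigl(\nu_i^*\cO_X(1)\bigr)|_{\Sigma}.
\]

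The key input is a comparison, on the fibers, between $\nu_i^*\cO_X(1)$ and the relative dualizing sheaf. Over $\bG_i$ the fiber is the normalization $\wt D$ of an integral plane sextic $D=X\cap\Lambda$. Adjunction in $\Lambda\simeq\bP^2$ gives $\omega_D=\cO_D(3)$, and on the normalization
\[
\omega_{\wt D}=\nu^*\cO_{\Lambda}(3)(-\Delta),
\]
where $\Delta\geq0$ is the conductor (adjoint) divisor. So $3\nu^*\cO(1)=\omega_{\wt D}+\Delta$ fiberwise. We globalize this over the smooth projective $\bH_i$ and on the resolution $S$ by writing
\[
3\nu_i^*\cO_X(1)\;=\;\omega_{\sC_i/\bH_i}+\mathcal D+q_i^*\cM_i,
\]
modulo divisors supported over $B_i$ and on $\Exc(\pi_i)$. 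Here $\mathcal D$ is an effective relative conductor divisor (possibly with some vertical components) and $\cM_i$ is a line bundle on $\bH_i$. Since $\mathcal D$, $\cM_i$ and the boundary corrections depend only on the fixed stratum data, ampleness of $\cL_i$ lets us choose $b_i$ with $b_i\cL_i-\cM_i-(\text{corrections})$ effective or nef. A cleaner variant is to choose $b_i$ with $b_i\cL_i-\cM_i$ globally generated and then move a section so it does not contain $u(C')$.

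We then pull back along $\Sigma$. The term $\omega_{\sC_i/\bH_i}|_\Sigma$ becomes $(K_{S/C'}\cdot\Sigma)$, using that $\rho$ is crepant and that forming the relative dualizing sheaf commutes with base change along $u$. The term $\mathcal D\cdot\Sigma\geq0$ once $\Sigma\not\subseteq\mathcal D$. This holds because $f(p)$ is a smooth point of $\Lambda_p\cap X$ for general $p$, so the generic point of $\Sigma$ avoids the preimage of the singular locus. The term $q_i^*\cM_i\cdot\Sigma=\deg u^*\cM_i$ is bounded by $b_i\deg u^*\cL_i$. However, $\mathcal D$ enters with the wrong sign. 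From $3\nu^*\cO(1)=\omega+\Delta$ we need an upper bound, and $\Delta\cdot\Sigma$ adds positively. So the inequality must instead be arranged as
\[
\nu^*\cO(1)\;\leq\;3\,\omega+(\text{pullback from the base}).
\]
This is the form matching the coefficient $3$ in front of $(K_{S/C'}\cdot\Sigma)$. Concretely, we would aim to show that
\[
3\,\omega_{\sC_i/\bH_i}-\nu_i^*\cO_X(1)+q_i^*(b_i\cL_i)
\]
is nef, or at least nonnegative on sections through smooth points. Fiberwise this is plausible: since $g_i\geq4$ by \Cref{prop:normalization-of-plane-curve}, $\deg 3\omega_{\wt D}=6g_i-6\geq18>6$. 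Relative positivity of $\omega$ on stable fibers (\Cref{lem:DM-stable}) then gives relative ampleness of $3\omega-\nu^*\cO(1)$ on the smooth locus, and hence nefness after twisting by a multiple of the ample $\cL_i$ pulled back from $\bH_i$. On the degenerate fibers, every contracted component is stable, so $\omega$ is positive there while $\nu^*\cO(1)$ has degree $0$; this keeps the argument uniform.

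The main obstacle is this last step: proving nefness of
\[
3\omega_{\sC_i/\bH_i}-\nu_i^*\cO(1)+b_iq_i^*\cL_i
\]
on the total space, rather than only fiberwise relative ampleness. Relative ampleness plus a large twist from an ample class on the base yields ampleness only over a compact base with a fixed family, which is the case here. So we would invoke the standard fact that for a relatively ample line bundle $A$ over a projective base with ample $H$, $A+bH$ is ample for $b\gg0$, applied to the fixed family over $\bH_i$, with $b_i$ depending only on $(\bH_i,\cL_i)$. Pulling back to $\Sigma$, and noting that base change to $S'$ and resolution to $S$ preserve $\omega$ by crepancy, gives the stated inequality.
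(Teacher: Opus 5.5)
Your final argument is essentially the paper's: you make $\omega_{\cC_i/\bH_i}^{\otimes 3}\otimes\nu_i^*\cO_X(-1)\otimes q_i^*\cL_i^{\otimes b_i}$ positive on the fixed family over $\bH_i$ and pull it back along the section, using $\nu_i\circ\psi_i\circ\sigma'=f\circ\tau$ and crepancy of $\rho$ (the adjunction/conductor detour can simply be dropped). The only difference is that you get ampleness from the fiberwise criterion for relative ampleness plus a twist by $\cL_i$, while the paper uses fiberwise very ampleness and $H^1$-vanishing to make $q_{i*}$ locally free and then global generation after twisting; one small point you skip is that on the non-contracted component $F_0$ of a degenerate fiber, $3\deg\omega|_{F_0}>6$ comes from its normalization having genus $\geq 4$ (\Cref{prop:normalization-of-plane-curve}, as in \Cref{lem:DM-stable}), not from $g_i\geq 4$ alone.
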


\begin{proof}
Consider the line bundle
\[
\mathscr M_i
\ \coloneqq\ 
\mathscr  \omega_{\cC_i /\bH_i}^{\otimes 3}\otimes\mathscr \nu_i^*\cO_X(-1)
\]
on $\cC_i$. For any fiber $F$ of $\cC_i\to \bH_i$, by \Cref{lem:DM-stable} and $g_i\geq 4$, $\mathscr M_i|_F$ is very ample and $H^1(F,\mathscr M_i|_F)=0$. Hence $q_{i*}\mathscr M_i$ is a locally free sheaf and the evaluation map
\[
q_i^*q_{i*}\mathscr M_i\longrightarrow\mathscr M_i
\]
is surjective. Since $\cL_i$ is ample, there exists an integer $b_i>0$, depending only on $\cH_i$ and $\cL_i$, such that
\[
q_{i*}\mathscr M_i\otimes\cL_i^{\otimes b_i}\ = \ q_{i*}(\mathscr M_i\otimes q_i^*\cL_i^{\otimes b_i})
\]
is globally generated. It follows that
\[
0
\ \leq\ 
\deg(\psi_i\circ \sigma')^*
 \bigl(\mathscr M_i\otimes q_i^*\cL_i^{\otimes b_i}\bigr)
=
3\deg(\psi_i\circ \sigma')^* \omega_{\cC_i /\bH_i}
-\deg(\nu_i\circ \psi_i\circ \sigma')^*\cO_X(1)
+b_i\deg u^*\cL_i.
\] Observe that the morphisms $\nu_i\circ\sigma'$ and $f\circ\tau$ agree. Thus one has 
\[
\deg(\nu_i\circ \psi_i\circ \sigma')^*\cO_X(1)
\ =\ 
\deg(f\circ\tau)^*\cO_X(1)
=
r\deg f^*\cO_X(1).
\]
Moreover, the resolution $\rho:S\to S'$ is crepant, so
\[
\deg(\sigma')^* \omega_{\cC_i /\bH_i}
\ =\ 
(K_{S/C'}\cdot\Sigma).
\]
Combining these equalities gives the desired result.
\end{proof}

\begin{prop}\label{prop:generic-moving-plane}
There exists a constant $c_X>0$,
depending only on $X$, such that
\[
\deg f^*\cO_{\bP^4}(1)\ \leq\  c_X(2g-2).
\]
\end{prop}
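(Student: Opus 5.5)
We will combine the preceding inequalities directly. Write $e\coloneqq\deg f^*\cO_{\bP^4}(1)$ and $P\coloneqq\deg\phi^*\cO_{\bG}(1)$, so that $P\leq 2g-2$ by \Cref{lem:pullback-degree-Plucker}.

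\textbf{Step 1.} Start from \Cref{lem:ambient-canonical-comparison}. Bound the term $\deg u^*\cL_i$ by \Cref{lem:fixed-boundary-control}, namely $\deg u^*\cL_i\leq m_i rP$. Bound $(K_{S/C'}\cdot\Sigma)$ by \Cref{cor:bounds-on-curves}(1). This gives
\[
r e\ \leq\ 3(2g_i-1)(2g'-2+s)+b_i m_i rP .
\]

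\textbf{Step 2.} Eliminate $g'$ and $s$. By Riemann--Hurwitz for $\tau\colon C'\to\tC$ we have $2g'-2=r(2g-2)+\deg R_\tau$. Then \Cref{cor:bounds-on-curves}(3) yields
\[
2g'-2\ \leq\ r(2g-2)+(d_i-1)m_i rP ,
\]
and \Cref{cor:bounds-on-curves}(2) yields $s\leq m_i rP$. Substituting and dividing by $r\geq1$ gives
\[
e\ \leq\ 3(2g_i-1)\bigl((2g-2)+d_i m_iP\bigr)+b_im_iP .
\]
Using $P\leq 2g-2$, this becomes
\[
e\ \leq\ \bigl(3(2g_i-1)(1+d_im_i)+b_im_i\bigr)(2g-2).
\]

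\textbf{Step 3.} Make the constant uniform. The coefficient $c_i\coloneqq 3(2g_i-1)(1+d_im_i)+b_im_i$ depends only on the stratum $\bG_i$ and the auxiliary choices ($\bH_i$, $\cL_i$, $E_i$, $m_i$, $b_i$). These choices were all fixed once for $X$, independently of the curve $C$. There are only finitely many strata, including those arising from the further stratifications used to make $\pi_i$ finite étale over $\bG_i$. We therefore set $c_X\coloneqq\max_i c_i$.

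\textbf{Main point requiring care.} The hypotheses of \Cref{lem:log-BMY} must actually hold for $S\to C'$ with the section $\Sigma$. We need $g'\geq2$, which was noted above. We need the fiber genus $g_i\geq4\geq2$. We need $S\to C'$ to be relatively minimal and semistable, which is the content of \Cref{lem:DM-stable} together with the crepant minimal resolution. We also need the count $s$ in \Cref{cor:bounds-on-curves} to dominate the number of singular fibers of $S\to C'$. This holds because the resolution only modifies singular fibers of $\pi'$, so the two fibrations have the same singular fibers.

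A second point to check is that Riemann--Hurwitz applies with $\deg R_\tau$ as stated. This is fine because $C'$ is the normalization of a component of the fiber product, so $\tau$ is a finite map of smooth curves.

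Everything else is the arithmetic above.
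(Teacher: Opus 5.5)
Your proposal is correct and is essentially the paper's own proof. Riemann--Hurwitz together with \Cref{cor:bounds-on-curves}(3) controls $2g'-2$; then \Cref{lem:ambient-canonical-comparison}, \Cref{lem:fixed-boundary-control}, \Cref{cor:bounds-on-curves}(1)(2) and \Cref{lem:pullback-degree-Plucker} bound $r\deg f^*\cO_{\bP^4}(1)$, after which you divide by $r$ and take the maximum over the finitely many strata. The only cosmetic difference is that you keep the factor $3(2g_i-1)$ in $c_i=3(2g_i-1)(1+d_im_i)+b_im_i$, whereas the paper bounds it by $57$ using $g_i\leq 10$; this gives the same constant $57m_id_i+57+b_im_i$ in the worst case.
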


\begin{proof}
By Riemann--Hurwitz and \Cref{cor:bounds-on-curves}(3),
\[
\begin{split}
   2g'-2
\ & =\ r(2g-2)+\deg R_\tau
\ \leq\  r(2g-2)+(d_i-1)m_ir\deg\phi^*\cO_{\bG}(1)\\
& \leq\ r\big(m_i(d_i-1)+1\big)(2g-2) . 
\end{split}
\] Combining \Cref{lem:fixed-boundary-control}, \Cref{cor:bounds-on-curves}(1)(2), \Cref{lem:ambient-canonical-comparison} and \Cref{lem:pullback-degree-Plucker} therefore
gives
\begin{equation}\nonumber
\begin{split}
    r\deg f^*\cO_{\bP^4}(1)\ 
& \leq\ 3(K_{S/C'}\cdot\Sigma)+b_i\deg u^*\cL_i \\ 
& \leq\ 3(2g_i-1)(2g'-2+s) + b_im_ir(2g-2)\\
& \leq\ 57(2g'-2)+57s+b_im_ir(2g-2) \\
& \leq\ r\big(57m_i(d_i-1)+57+57m_i+b_im_i\big)(2g-2) \\
& =\ r(57m_id_i+57+b_im_i)(2g-2), 
\end{split}
\end{equation} where we use the fact that $g_i\leq10$. As there are only finitely many strata $\bG_i$, taking the maximum of the corresponding constants gives a constant $c_X$ depending only on $X$.
\end{proof}

\smallskip

\begin{proof}[Proof of \Cref{thm:main-intro}]
Let $f\colon\widetilde C\to X$ be the normalization of an integral
curve $C$ on $X$. If $\rk\cE=2$, the result follows from
\Cref{prop:line-scroll}. Suppose that $\rk\cE=3$. According to whether
$\phi$ is constant, the points $f(p)$ are always singular on
$X\cap\Lambda_p$, or neither holds, the result follows respectively
from \Cref{prop:constant-plane}, \Cref{prop:singular-plane-section}, and
\Cref{prop:generic-moving-plane}. Taking the maximum of the resulting
constants proves that $X$ is algebraically hyperbolic.
\end{proof}

\smallskip

\bibliographystyle{alpha}
\bibliography{citation}

\end{document}